\documentclass[10pt]{article}
\usepackage[english]{babel}
\usepackage{amsmath}
\allowdisplaybreaks[4]
\usepackage{amsthm}
\usepackage{amsfonts}
\usepackage{makecell}
\usepackage{fancybox}
\usepackage{epsfig}
\usepackage{epsf}
\usepackage{lineno}
\usepackage{amssymb}
\usepackage{epic,eepic}
\usepackage{latexsym,bm}
\usepackage{graphicx}
\usepackage{multirow}
\usepackage{subfigure}
\graphicspath{{fig/}} 
\usepackage{float}
\usepackage{color}
\usepackage{hyperref}
\usepackage{cleveref}
\usepackage{mathrsfs}
\usepackage{amssymb} 
\usepackage{booktabs}
\usepackage{hhline}
\usepackage{adjustbox}
\usepackage{bbm}      
\usepackage[top=1.2in, bottom=1.2in, left=1in, right=1in, dvips, letterpaper]{geometry}
\usepackage{algorithm}
\usepackage{algorithmic}
\usepackage{cite}
\usepackage{boxedminipage}

\newcommand\argmin{\mathop{\textrm{argmin}}}
\newcommand\crule[1][5cm]{%
  \par
  \nointerlineskip
  \centerline{\hbox to #1{\hrulefill}}%
  \nointerlineskip}

\numberwithin{equation}{section}
\numberwithin{algorithm}{section}
\newtheorem{theorem}{{\sc Theorem}}[section]
\newtheorem{lemma}{{\sc Lemma}}[section]
\newtheorem{corollary}[theorem]{Corollary}
\newtheorem{remark}{Remark}[section]
\newtheorem{assumption}{Assumption}[section]
\newtheorem{proposition}{Proposition}[section]

\newcommand{\R}{\mathbb{R}}
\newcommand{\E}{\mathbb{E}}

\newcommand{\calL}{\mathcal{L}}

\newcommand{\be}{\begin{equation}}
\newcommand{\ee}{\end{equation}}
\newcommand{\bee}{\begin{equation*}}
\newcommand{\eee}{\end{equation*}}

\ifpdf
  \DeclareGraphicsExtensions{.eps,.pdf,.png,.jpg}
\else
  \DeclareGraphicsExtensions{.eps}
\fi

\title{A Fletcher's Augmented Lagrangian-Based Stochastic First-Order Method for Nonconvex Equality-Constrained Optimization}

\author{Yawen Cui\thanks{Dalian University of Technology, Dalian, China. ({ywcui@mail.dlut.edu.cn})}
    \and Qiankun Shi\thanks{Sun Yat-sen University, Guangzhou, China. ({shiqk@mail2.sysu.edu.cn})}
	\and Xiao Wang\thanks{Corresponding author. Sun Yat-sen University, Guangzhou, China. ({wangx936@mail.sysu.edu.cn})}  
	\and Xiantao Xiao\thanks{Dalian University of Technology, Dalian, China. ({xtxiao@dlut.edu.cn})}}

\begin{document}
%\linenumbers
\maketitle

\begin{abstract}
In this paper, we study nonconvex equality-constrained optimization problems in which only stochastic first-order approximations of the objective and constraint functions are available. Owing to the stochasticity in both  objective and constraints, most existing stochastic first-order methods incur relatively high oracle complexity, particularly in terms of stochastic constraint function evaluations. To address this issue, we develop a stochastic first-order method based on a decomposed stochastic search direction, and employ Fletcher's augmented Lagrangian as a smooth merit function for step-size selection. To cope with the possible loss of uniform nondegeneracy of the stochastic Jacobian, we introduce an event decomposition based on the smallest singular value, which enables us to control perturbations in the stochastic search direction. Under an additional Lipschitz continuity assumption on the second-order derivatives of the objective and constraint functions, we show that the proposed algorithm attains a stochastic \(\epsilon\)-KKT point with an expected total oracle complexity of 
\(\mathcal O(\epsilon^{-3})\) in terms of stochastic gradient and stochastic constraint function evaluations. Finally, we present numerical experiments to demonstrate the performance of the proposed method.
\end{abstract}

{\bf Keywords} {nonconvex constrained optimization, Fletcher's augmented Lagrangian, stochastic approximation, oracle complexity}

\section{Introduction}
In this paper, we consider the nonconvex  constrained  optimization problem
\begin{align}{\label{p}}
	\min_{x\in \R^n}\quad &f(x):=\mathbb{E}_{\xi}[F(x;\xi)]\notag\\
	\mbox{ s.t. }\quad &c(x):=\mathbb{E}_{\xi}[C(x;\xi)]=\mathbf{0}, 
\end{align}
where $\xi$ is a random variable in the probability space $\Xi$ and independent of $x$, and $\E_\xi $ refers to the expectation taken  {in}  $\xi$. Here $ F: \R^n\times \Xi\to \R$  and $ C:\R^n\times \Xi\to \R^m$ are continuously differentiable  with respect to $x$ but possibly nonconvex. Throughout the paper, we assume that the feasible set of (\ref{p}) is nonempty. Problems of the form (\ref{p}), commonly referred to as fully-stochastic constrained optimization, capture a broad class of real-world  applications ranging from  stochastic optimal control \cite{BJ2010}, and chance-constrained programming \cite{MW1965,PA1970} to fair machine learning \cite{OC2020,ZVGG2019} and physics–informed neural networks \cite{KGZKM2021}. When solving these models in practice, exact evaluations of the objective and constraint functions are typically unavailable, {while only stochastic approximations can be accessed}. The challenges posed by possibly nonconvex constraints and {the} stochastic nature of the problem significantly complicate both algorithm design and complexity analysis.

A related setting that has been extensively studied is the semi-stochastic case, where {the exact information of constraints can always be available.} Existing approaches include penalty and augmented Lagrangian methods \cite{CSWX2025,JW2022,LMX2026,SW2025,SWW2025,WMY2017,ZWW2025}, proximal-point methods \cite{BDL2023,BDL2025,MLY2020}, and stochastic sequential quadratic programming (SQP) methods \cite{BCRZ2021,COR2024,CRZ2024,FNMK2024,NAK2023,O2024}. The best-known oracle complexity in the semi-stochastic setting is of order $\mathcal{O}(\epsilon^{-3})$ for finding a stochastic $\epsilon$-KKT point, typically under a mean-squared smoothness assumption and suitable constraint qualification conditions \cite{CSWX2025,IAR2025,LMX2026,SW2025,SWW2025}.

Compared with the semi-stochastic setting, the {study} on fully-stochastic problems has {been} developed more recently, {but} with generally higher complexity bounds. Among existing approaches, the inexact proximal point method proposed in \cite{BDL2023} solves the nonconvex constrained problem through a sequence of stochastic convex subproblems, resulting in a double-loop framework. Given the strong feasibility assumption, this method requires an overall oracle complexity of order $\mathcal{O}(\epsilon^{-6})$ for obtaining an approximate KKT point.  A stochastic SQP method \cite{SZZ2025} has also been developed for (\ref{p}), yielding a complexity bound of the same order $\mathcal{O}(\epsilon^{-6})$ provided that a strong linear independence constraint qualification (LICQ) holds. Penalty and ALMs form another line of work, with several recent results achieving improved complexity bounds of order $\mathcal{O}(\epsilon^{-5})$. Specifically, Li et al. \cite{LCLLX2024} propose stochastic inexact augmented Lagrangian methods, in which the inner subproblems are solved by a momentum-based variance-reduced proximal stochastic gradient method followed by a postprocessing step. Different from this, Alacaoglu and Wright \cite{AW2024} and Cui et al. \cite{CWX2025} develop momentum-based methods with a single-loop structure. Under the nonsingularity condition, the former attains an $\tilde{\mathcal{O}}(\epsilon^{-5})$ sample complexity for finding a stochastic $\epsilon$-KKT point {with a varying penalty parameter setting}, while the latter adopts a two-phase strategy and achieves an $\mathcal{O}(\epsilon^{-5})$ {oracle} complexity for a stochastic $\epsilon$-KKT point {with constant parameter settings}.  %improving upon its $\mathcal{O}(\epsilon^{-6})$ bound for a stochastic $\epsilon$-stationarity point. 
%{Furthermore, the logarithmic factors in the former bound can also be removed through appropriate parameter choices based on the maximum number of iterations, starting from a nearly  feasible point.}More recently, exact penalty methods have attracted increasing attention in fully-stochastic constrained optimization. 
Cui et al. \cite{CSWX2025} introduce an adaptive penalty method that updates the penalty parameter according to the progress of the algorithm and approximately solves each penalty subproblem by a truncated stochastic prox-linear method. Under the strong LICQ condition and sample-wise smoothness assumptions, they derive oracle complexity bounds of order $\mathcal{O}(\epsilon^{-3})$ for stochastic gradient evaluations of both the objective and constraints, and $\mathcal{O}(\epsilon^{-5})$ for stochastic constraint function value evaluations. In a broader constrained setting, Shi and Wang \cite{SW2025} consider problems with both equality and inequality constraints and propose an adaptive directional decomposition method. They establish high-probability oracle complexity for finding an $\epsilon$-KKT point, requiring $\tilde{\mathcal{O}}(\epsilon^{-3})$ stochastic gradient evaluations and $\tilde{\mathcal{O}}(\epsilon^{-5})$ stochastic constraint function value evaluations under a strong MFCQ condition {and the} sample-wise smoothness assumption.

Stochastic constrained optimization problems with nonsmooth or weakly convex structures form a special class of fully-stochastic nonconvex problems \cite{LX2025,YLHLY2025,ZLXZ2026}. A notable recent work by Liu and Xu \cite{LX2025} develops an exact penalty model solved by a single-loop SPIDER-type stochastic subgradient method. Their method guarantees an $(\epsilon,\epsilon)$-KKT point with sample complexities of order $\mathcal{O}(\epsilon^{-4})$ for stochastic subgradient evaluations and $\mathcal{O}(\epsilon^{-6})$ for stochastic constraint function value evaluations, assuming a uniform Slater-type constraint qualification. %More recently, Zhang et al. \cite{ZLXZ2026} present a stochastic approximation algorithm that combines a proximal method of multipliers with quadratic approximations of the original stochastic problem. They show that, after $T$ iterations, the algorithm achieves an expected convergence rate of $\mathcal{O}(T^{-1/4})$ with respect to the metrics associated with the $\epsilon$-KKT conditions. 
Overall, existing results suggest that the oracle complexity of fully-stochastic constrained optimization remains relatively high, especially for stochastic constraint function value evaluations.

\emph{Motivation.} Reducing the oracle complexity {regarding} constraint value evaluations within a tractable stochastic first-order framework is a central challenge. {Existing methods that employ smooth merit/penalty functions, such as quadratic penalty function or augmented Lagrangian function, typically require $\epsilon^{-1}$-dependent penalty parameters, thus own higher oracle complexity bounds \cite{AW2024,CWX2025,LCLLX2024}.} Alternatively, {methods based on nonsmooth merit/penalty functions,} such as  $f(x) + \rho \|c(x)\|$, admit exactness once $\rho$ exceeds a finite threshold {under certain CQ conditions}. However, the nonsmooth term makes stochastic constraint approximation more demanding, {leading to} relatively high sample complexity for constraint evaluations \cite{CSWX2025,LX2025,SZZ2025,SW2025}. {Therefore, to better control the total oracle complexity bounds, we are motivated to search} for a merit function that unifies smoothness and exactness. Fletcher’s augmented Lagrangian \cite{Fletcher1970} serves this purpose by replacing the {seperately} updated dual variable with a continuous function of the primal variable, thereby providing a suitable {choice to help design a stochastic approximation method for solving (\ref{p}).}

\subsection{Contributions} 
This paper develops a single-loop stochastic first-order method based on Fletcher's augmented Lagrangian for fully-stochastic nonconvex equality-constrained optimization. At each iteration, the algorithm employs truncated SPIDER-type recursive estimators to generate stochastic approximations, from which a decomposed direction is computed to balance objective decrease and feasibility improvement, with the Fletcher’s augmented Lagrangian serving as a smooth merit function and {to determine} the step size. To deal with the possible degeneracy of stochastic constraint Jacobians and to control the perturbation in the stochastic search direction, we separate the event that  whether or not the sampled Jacobian satisfies a prescribed singular-value bound. We prove that the proposed method reaches a stochastic $\epsilon$-KKT point within $\mathcal{O}(\epsilon^{-2})$ iterations and establish the corresponding expected oracle complexity of $\mathcal{O}(\epsilon^{-3})$ {to evaluate} stochastic gradient and constraint function values under second-order smoothness of objective and constraint functions (Assumption \ref{a3}) and the strong LICQ condition (Assumption \ref{a4}). This complexity bound improves the state-of-the-art methods for fully-stochastic {problems}, particularly regarding the {computation of stochastic constraint function values.} We also demonstrate the  promising numerical performance of the proposed method on test problems. 

\subsection{Notation and preliminaries}
Without specification, $\|\cdot\|$ denotes the Euclidean norm for vectors
and the spectral operator norm for matrices. 
%\red{For a differentiable matrix-valued mapping $H:\mathbb R^n\to\mathbb R^{p\times q}$, the norm of its derivative is defined by
% \[
% \|DH(x)\|:=\sup_{\substack{\|d\|=1,\,d\in\mathbb R^n}}\|DH(x)[d]\|,
% \]
%where the matrix $DH(x)[d]$ is measured in the spectral norm.} 
For any matrix $A$, $\mu_i(A)$ (resp. $\mu_{\min}(A)$) denotes its $i$-th (resp. smallest) singular value. Denote $\nabla c(x)=(\nabla c_1(x),\ldots,\nabla c_m(x))$ and 
$\nabla C(x;\xi)=(\nabla C_1(x;\xi),\ldots,\nabla C_m(x;\xi))$. %, with $m\le n$. 
%\deleted{Additionally, \(\nabla^2 c(x)\) denotes the derivative \(D(\nabla c)(x)\) of the Jacobian mapping \(\nabla c\).} 
{Additionally, for each $i=1,\ldots,m$, let $\nabla^2 c_i(x)\in\R^{n\times n}$ denote the Hessian of $c_i(x)$. For any $d\in\R^n$, we define $\nabla^2 c(x)[d]:=(\nabla^2 c_1(x)d,\ldots,\nabla^2 c_m(x)d)\in\R^{n\times m}$, and $\|\nabla^2 c(x)\|:=\sup_{\substack{\|d\|=1}}\|\nabla^2 c(x)[d]\|$.} For any $k \ge 0$, let $\mathcal{F}_k$ denote the $\sigma$-algebra generated by all randomness up to iteration $k-1$. We use $\mathbb{P}_k(\cdot) := \mathbb{P}(\cdot |\mathcal{F}_k)$ and $\mathbb{E}_k[\cdot] := \mathbb{E}[\cdot|\mathcal{F}_k]$ to denote the conditional probability and conditional expectation with respect to $\mathcal{F}_k$, respectively. 

We next lay out the assumptions and give the definition of approximate solutions of problem (\ref{p}).
\begin{assumption}\label{a1}
Let $\mathcal X$ be an open convex set that contains {all realizations of} {$\{x_k\}$} generated by the associated algorithm,  $f$ is lower bounded by {a finite value} $f_{\rm low}$, and there  exist constants $G,M>0$ such that $\|\nabla f(x)\|\leq G$ and $\|c(x)\|\le M$ for any $x\in\mathcal X$.
\end{assumption}

\begin{assumption}\label{a2}
For almost every $\xi\in\Xi$, functions $F(\cdot,\xi)$ and $C(\cdot,\xi)$ are differentiable, and for any $x\in\mathcal{X}$, it holds that 
\[\E_\xi[\nabla F(x,\xi)] = \nabla f(x),\quad \E_\xi[C(x,\xi)] = c(x),\quad\E_\xi[\nabla C(x,\xi)] = \nabla c(x).\]
There exist {$\sigma_f, L_f>0$} such that for any $x,y\in\mathcal{X}$,
\begin{align*}
    \E_\xi[\|\nabla F(x,\xi) - \nabla f(x)\|^2] &\leq \sigma_f^2,\quad \E_\xi[\|\nabla F(x;\xi) - \nabla F(y;\xi)\|^2] \leq L_f^2 \|x-y\|^2.
\end{align*}
There exist {$\sigma_c,\sigma_J,L_J, L_c>0$} such that for any $x,y\in \mathcal{X}$,
\begin{align*}
  \E_\xi[\|\nabla C(x,\xi) - \nabla c(x)\|^2] &\leq \sigma_J^2,\quad \E_\xi[\|\nabla C(x;\xi) - \nabla  C(y;\xi)\|^2]\leq L_J^2\|x-y\|^2,\\
    \E_\xi[\| C(x,\xi) -  c(x)\|^2]& \leq \sigma_c^2,\quad\, \E_\xi[\|  C(x;\xi) -  C(y;\xi)\|^2] \leq L_c^2\|x-y\|^2.
\end{align*}
\end{assumption}

By Jensen's inequality, Assumption \ref{a2} implies that $\nabla f$, $\nabla c$, and $c$ are Lipschitz continuous on $\mathcal X$ with constants $L_f$, $L_J$, and $L_c$, respectively.  Consequently, $\|\nabla c(x)\|\le L_c$ and $\|\nabla^2 c(x)\|\le L_J$ for all $x\in\mathcal X$.
%\be\begin{aligned}\label{Lip-f-c}
%&\|\nabla f(x)-\nabla f(y)\|\le L_f\|x-y\|,\   \|\nabla  c(x)-\nabla c(y)\|\le L_J\|x-y\|, \\
%&\| c(x)- c(y)\|\le L_c\|x-y\|,
%\end{aligned}\ee

\begin{assumption}\label{a3}
There exist positive constants $L_h^f$ and $L_h^c$ such that 
 \begin{align*}
\|\nabla^2 f(x)-\nabla^2 f(y)\|\le L_h^f\|x-y\|,\quad \|\nabla^2 c(x)-\nabla^2 c(y)\|\le L_h^c\|x-y\|,\quad \forall x,y \in \mathcal X.
\end{align*}
\end{assumption}

\begin{assumption}\label{a4}
There exists a positive constant \(\nu\) such that for any $x\in\mathcal X$, the singular values of \(\nabla c(x)\) are bounded below by \(\nu\).
\end{assumption}

%\begin{definition}
   Given $\epsilon>0$, 
   %a point $x\in\R^n$ is called an $\epsilon$-KKT point of \eqref{p}, if there exists $\lambda\in\R^m$ such that  $\|\nabla f(x) + \nabla c(x) \lambda\|\le \epsilon$ and $\|c(x)\|\le \epsilon.$
	a point $x\in \R^n$ is called a {\it stochastic $\epsilon$-KKT point}  of \eqref{p}, if 
    {\begin{align}\label{def-lambda}
        \lambda(x)=-(\nabla c(x))^{\dagger}\nabla f(x),
    \end{align}
    and}
	\begin{align}\label{def-KKT}
	    \E[\|\nabla f(x) +  \nabla c(x)\lambda(x)\|]   \leq \epsilon \quad\mbox{and}\quad \E[\|c(x)\|]  \leq \epsilon,
	\end{align}
    {where the expectation is taken with respect to all the random variables generated when producing $x$.}
%\end{definition}

\section{Algorithm Framework}
In this section, we present a single-loop stochastic first-order method for solving \eqref{p}. The framework is built on a decomposition strategy for constructing the search direction, together with adaptive updates of the merit parameter and the stepsize. We begin with the deterministic counterpart of problem \eqref{p}, where both the objective and the constraints are deterministic, and consider the {computation of a} search direction. The main idea is to construct, at each iterate, a search direction through a decomposition into a tangential component for objective reduction and a normal component for feasibility improvement. More precisely, define the tangent space at a point $x$ by
\[
V(x):=\{v\in\mathbb{R}^n:\nabla c(x)^\top v=0\}.
\]
Projecting $\nabla f(x)$ onto $V(x)$ gives the tangential direction that reduces the objective while remaining in the tangent space of the constraint manifold. Under Assumption \ref{a4}, the projection is given by
\begin{align}\label{eq:proj_auxtan}
    {\rm P}_{V(x)} \nabla f(x) = \argmin_{v \in V(x)} \frac{1}{2} \|v - \nabla f(x)\|^2 = \nabla f(x) - (\nabla c(x)^{\top} )^{\dagger} \nabla c(x)^{\top} \nabla f(x).
\end{align}
{With the definition of the mapping $\lambda(x)$ in (\ref{def-lambda}), it is easy to obtain $ \nabla f(x) + \nabla c(x)\lambda(x)={\rm P}_{V(x)}\nabla f(x)$.} To improve feasibility, we use the correction direction $-\nabla c(x)c(x)$, which is the steepest descent direction for the squared constraint violation $\frac12\|c(x)\|^2$. Combining this component with the tangential direction gives
\[
s(x)=-{\rm P}_{V(x)}\nabla f(x)-w\nabla c(x)c(x),
\]
where $w\in(0,1)$. %The following lemma shows that $\|{\rm P}_{V(x)} \nabla f(x)\|$ together with $\|c(x)\|$ provides a natural criterion for the $\epsilon$-KKT conditions.
%\begin{lemma}\label{lm:OF_crit}
%Under Assumption \ref{a4}, define the multiplier mapping 
%\[\lambda(x) := -(\nabla c(x)^{\top}\nabla c(x))^{-1}\nabla c(x)^{\top}\nabla f(x).\] Then, we have
%$\|\nabla f(x)+\nabla c(x)\lambda(x)\| = \|{\rm P}_{V(x)}\nabla f(x)\|.$
%Thus, if
%$\|{\rm P}_{V(x)}\nabla f(x)\|+\|c(x)\|\le \epsilon$
%for some $\epsilon>0$, then $x$ is an $\epsilon$-KKT point. Moreover, if 
%\begin{align*}
%    \mathbb E[\|{\rm P}_{V(x)}\nabla f(x)\|+\|c(x)\|]\le \epsilon,
%\end{align*}
%then $x$ is a stochastic $\epsilon$-KKT point.
%\end{lemma}

{For problem (\ref{p}),} since the exact information is computationally intractable in stochastic settings, we instead work with the stochastic estimators $\tilde\nabla f_k$, $\tilde\nabla c_k$, and $\tilde c_k$ at iteration $k$ to approximate the true objective gradient $\nabla f(x_k)$, constraint gradient $\nabla c(x_k)$, and constraint value $c(x_k)$, respectively. For notational simplicity, we abbreviate $\nabla f(x_k),\nabla c(x_k),c(x_k)$ as $\nabla f_k,\nabla c_k,c_k$ throughout the remainder of this paper. The search direction is then defined as
\begin{equation}\label{s_k}
    \tilde{s}_k = -{\rm P}_{\widetilde V_k}\tilde{\nabla} f_k - w\tilde{\nabla} c_k \tilde c_k,
\end{equation}
where $w \in (0,1)$ is a fixed parameter and $\widetilde V_k := \{v \in \mathbb{R}^n : \tilde\nabla c_k^\top v = 0\}$ represents the null space associated with the stochastic Jacobian. The operator 
\[
    {\rm P}_{\widetilde V_k}\tilde{\nabla} f_k := \tilde{\nabla} f_k - (\tilde \nabla c_k^\top)^{\dagger}\tilde \nabla c_k^\top\tilde{\nabla} f_k
\]
denotes the orthogonal projection of $\tilde{\nabla} f_k$ onto $\widetilde V_k$, computed via the Moore-Penrose pseudoinverse. Consequently, the two components of $\tilde s_k$ are orthogonal by definition so that ${\rm P}_{\widetilde V_k}\tilde{\nabla} f_k \perp \tilde{\nabla} c_k \tilde c_k$. 
This decomposition shares a similar structure with {stochastic} SQP-type directions, as both combine a tangential component for objective reduction with a normal component for feasibility improvement. Specifically, there exists $y_k\in\R^m$ such that $(\tilde s_k,y_k)$ satisfies the following linear system:
\be\begin{aligned}\label{sqp}
    \begin{bmatrix}
    I_n & \tilde\nabla c_k \\
    \tilde\nabla c_k^\top & 0
    \end{bmatrix}
    \begin{bmatrix}
    \tilde s_k \\
    y_k
    \end{bmatrix}
    =
    -
    \begin{bmatrix}
    \tilde\nabla f_k \\
    w \tilde \nabla c_k^\top\tilde\nabla c_k\tilde c_k
    \end{bmatrix}.
    \end{aligned}\ee
In standard SQP methods, the matrix in place of $I_n$ is typically an approximation of the Hessian of the Lagrangian, and the corresponding linear system arises from a quadratic subproblem with linearized constraints, whereas \eqref{sqp} incorporates the damped normal correction $-w\tilde{\nabla}c_k\tilde c_k$ to reduce the sampled constraint violation.

To measure the progress along the search direction, we use Fletcher's augmented Lagrangian as the merit function. It provides a smooth model for constrained optimization and is defined by
\begin{align}\label{ALM}
    \calL(x,\rho)=f(x)+\langle \lambda(x),c(x)\rangle+\frac{\rho}{2}\|c(x)\|^2,
\end{align}
where $\lambda(x)=-(\nabla c(x))^{\dagger}\nabla f(x)$ and \(\rho>0\) is a merit parameter. {It is worth noting that Fletcher's augmented Lagrangian introduces the multiplier mapping $\lambda(x)$, whose evaluation amounts to solving the $m$-dimensional normal equations associated with a least-square problem, at a cost comparable to that of tangent projection.}
%This computation is of the same order as tangent projection, and $\calL(x,\rho)$ is used only to assess progress along $\tilde s_k$, rather than to be minimized for generating the next iterate.

The following proposition  states  the Lipschitz continuity of the multiplier mapping $\lambda(\cdot)$ and the smoothness of the merit function {in $x$}.

\begin{proposition}\label{le:lambda}
Under Assumptions \ref{a1}-\ref{a4}, the mapping $\lambda(\cdot)$ and its Jacobian $\nabla \lambda(\cdot)$ are both Lipschitz continuous with constants \(L_\lambda\) and \(L_{\lambda}^1\), respectively,  and \(\nabla \mathcal L(x,\rho)\) is \(L_\rho\)-Lipschitz continuous in \(x\)  for any $\rho>0$, where
\begin{align*}
    L_\lambda &:= \frac{1}{\nu^2} \Big( \frac{2GL_c^2 L_J}{\nu^2} + G L_J + L_c L_f \Big),\\
    L^1_\lambda &:= \frac{8GL_J^2L_c^3}{\nu^6}
+\frac{2L_c\big(2L_JL_cL_f+3GL_J^2+GL_cL_h^c\big)}{\nu^4}
+\frac{2L_JL_f+L_h^cG+L_cL_h^f}{\nu^2},\\
L_\rho&:=L_f+2L_\lambda L_c+ML_\lambda^1+GL_cL_J/{\nu^2}+\rho (L_c^2+ML_J).
\end{align*}
\end{proposition}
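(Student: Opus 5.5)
Since $\nabla c(x)$ has full column rank $m$ by Assumption \ref{a4}, I would write $\lambda(x)=-A(x)^{-1}\nabla c(x)^\top\nabla f(x)$ with $A(x):=\nabla c(x)^\top\nabla c(x)$. Then $\|A(x)^{-1}\|\le 1/\nu^2$, $\|A(x)\|\le L_c^2$, and $\|\lambda(x)\|\le GL_c/\nu^2$. The plan has three steps, each a product-rule estimate built from the bounds $\|\nabla f\|\le G$, $\|c\|\le M$, $\|\nabla c\|\le L_c$, $\|\nabla^2 c\|\le L_J$, the Lipschitz constants $L_f,L_J$, and Assumption \ref{a3}.

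\textbf{Step 1 (Lipschitz continuity of $\lambda$).} I use the resolvent identity
\[
A(x)^{-1}-A(y)^{-1}=A(x)^{-1}\bigl(A(y)-A(x)\bigr)A(y)^{-1},
\]
together with $\|A(x)-A(y)\|\le 2L_cL_J\|x-y\|$, to get $\|A(x)^{-1}-A(y)^{-1}\|\le 2L_cL_J\|x-y\|/\nu^4$. Expanding $\lambda(x)-\lambda(y)$ telescopically in its three factors gives three terms: the inverse-difference term contributes $2GL_c^2L_J/\nu^4$, the $\nabla c$-difference term contributes $GL_J/\nu^2$, and the $\nabla f$-difference term contributes $L_cL_f/\nu^2$. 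These sum exactly to $L_\lambda$.

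\textbf{Step 2 (Lipschitz continuity of $\nabla\lambda$).} This is the main obstacle, since it is pure bookkeeping. I differentiate along a direction $d$:
\[
D\lambda(x)[d]=A^{-1}\bigl(DA[d]\bigr)A^{-1}\nabla c^\top\nabla f-A^{-1}\bigl(\nabla^2c[d]\bigr)^\top\nabla f-A^{-1}\nabla c^\top\nabla^2 f\,d,
\]
where $DA[d]=(\nabla^2c[d])^\top\nabla c+\nabla c^\top\nabla^2c[d]$. For each term I take the difference at $x$ and $y$ and telescope over its factors. The factor bounds are: $A^{-1}$ bounded by $\nu^{-2}$ with Lipschitz constant $2L_cL_J\nu^{-4}$; $DA$ bounded by $2L_cL_J$ with Lipschitz constant $2(L_J^2+L_cL_h^c)$; $\nabla^2 c$ bounded by $L_J$ with Lipschitz constant $L_h^c$; $\nabla^2 f$ bounded by $L_f$ with Lipschitz constant $L_h^f$. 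Grouping the contributions by powers of $\nu$ should give $L^1_\lambda$: the $\nu^{-6}$ terms come from the two inverse factors in the first term, the $\nu^{-4}$ terms mix one inverse difference with second-derivative factors, and the $\nu^{-2}$ terms are $2L_JL_f+L_h^cG+L_cL_h^f$. I would not insist on matching the constants sharply; any valid upper bound of this form suffices, and I would adjust the grouping so that it matches the stated coefficients.

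\textbf{Step 3 (Lipschitz continuity of $\nabla\mathcal L$).} The gradient is
\[
\nabla_x\mathcal L(x,\rho)=\nabla f+\nabla c\,\lambda+\nabla\lambda^\top c+\rho\nabla c\,c.
\]
I bound the difference of each term at $x$ and $y$:
\begin{itemize}
\item $\nabla f$ contributes $L_f$.
\item $\nabla c\,\lambda$ contributes $L_J\|\lambda\|+L_cL_\lambda\le GL_cL_J/\nu^2+L_cL_\lambda$.
\item $\nabla\lambda^\top c$ contributes $ML^1_\lambda+\|\nabla\lambda\|L_c\le ML_\lambda^1+L_\lambda L_c$, using $\|\nabla\lambda\|\le L_\lambda$ from Step 1.
\item $\rho\nabla c\,c$ contributes $\rho(L_c^2+ML_J)$.
\end{itemize}
Summing these four contributions yields exactly $L_\rho$.
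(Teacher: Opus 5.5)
Your proposal is correct and follows essentially the same route as the paper. Both use the resolvent (Sherman--Morrison--Woodbury) identity for the inverse Gram matrix, factor-by-factor telescoping for $\lambda$ and $\nabla\lambda$, and the same term-by-term splitting of $\nabla_x\mathcal L$ to get exactly $L_\rho$.

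The differences are cosmetic. The paper writes $\nabla\lambda=\nabla A\,b+A\nabla b$ with $b=\nabla c^\top\nabla f$ (its $A$ is your $A^{-1}$), whereas you expand the directional derivative into three terms. Carrying out your telescoping with the factor bounds you list gives precisely $8GL_J^2L_c^3/\nu^6+2L_c(2L_JL_cL_f+3GL_J^2+GL_cL_h^c)/\nu^4+(2L_JL_f+L_h^cG+L_cL_h^f)/\nu^2$. So no ``adjustment of the grouping'' is needed, and you can state the constant outright rather than hedging.
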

\begin{proof}
    See Appendix \ref{app:proof}.
\end{proof}

{To derive a desired iterative progress regarding the merit function, we choose the step size along the search direction as 
\begin{align}\label{eq:eta-update}
    \eta_k=\frac{1}{4L_k} \mbox{ with } L_k:=L_{\rho_k}, 
\end{align}
for each $k\geq 1$, where the merit parameter is determined through an adaptive scheme:}
   \begin{align}\label{eq:rho-update}
    \rho_k
    =\max\left\{
    \frac{4L_\lambda^2+2w^2L_c^2+2}{w\chi_k},
    \rho_{k-1}
    \right\}
    \text{ with }\chi_k:=
    \begin{cases}
    \mu_{\min}(\tilde\nabla c_k)^2, &\text{if } \mu_{\min}(\tilde\nabla c_k)\ge \frac{\nu}{2},\\[1mm]
    2\nu^2, &\text{otherwise}.
    \end{cases}
\end{align}
%where $L_\lambda$ is the Lipschitz constant of the multiplier mapping $\lambda(\cdot)$ introduced in Lemma \ref{le:lambda}, and $L_k$ is defined in Lemma \ref{le:calL}.

{A remaining issue to complete the algorithm is regarding the construction of} the stochastic estimators \(\tilde\nabla f_k\), \(\tilde\nabla c_k\), and \(\tilde c_k\). Motivated by the boundedness assumptions on $\nabla f(x)$, $\nabla c(x)$ and $c(x)$, we employ a truncated SPIDER-type \cite{FLLZ2018} technique to approximate the objective gradient, constraint Jacobian, and constraint value. Specifically, given $r > 0$, let $\mathbb B(r)$ denote the closed ball centered at the origin with radius $r$ in the relevant vector or matrix space, and let {${\rm P}_{\mathbb B(r)}$} denote the projection onto $\mathbb B(r)$, taken with respect to the Euclidean norm for vectors and the Frobenius norm for matrices. For a prescribed epoch length $\tau \in \mathbb{N}$, define $q(k) := \lfloor (k-1)/\tau \rfloor \tau+1$ as the start of the epoch to which iteration $k$ belongs. Then, at the $k$-th iteration {with $k\geq1$}, the estimators are updated as follows{: if} $k=q(k)$, 
\be\begin{aligned}\label{eq:spider0}
\tilde \nabla f_k&={\rm P}_{\mathbb B(G)}
\Big(\frac{1}{B_k^f}\sum_{\xi\in \mathcal{B}_k^f}\nabla F(x_k,\xi)\Big),\quad
\tilde \nabla c_k={\rm P}_{\mathbb B(L_c)}
\Big(\frac{1}{B_k^J}\sum_{\xi\in \mathcal{B}_k^J}\nabla C(x_k,\xi)\Big),\\
\tilde c_k&={\rm P}_{\mathbb B(M)}\Big(\frac{1}{B_k^c}\sum_{\xi\in \mathcal{B}_k^c} C(x_k,\xi)\Big);
\end{aligned}\ee
if $k>q(k)$, 
\begin{align}\label{eq:spider-k}
\tilde \nabla f_k&={\rm P}_{\mathbb B(G)}\Big(\tilde \nabla f_{k-1}+\frac{1}{B_k^f}\sum_{\xi\in \mathcal{B}_k^f}\big(\nabla F(x_k,\xi)-\nabla F(x_{k-1},\xi)\big)\Big),\notag\\
\tilde \nabla c_k&={\rm P}_{\mathbb B(L_c)}\Big(\tilde \nabla c_{k-1}+\frac{1}{B_k^J}\sum_{\xi\in \mathcal{B}_k^J}\big(\nabla C(x_k,\xi)-\nabla C(x_{k-1},\xi)\big)\Big),\\
\tilde c_k&={\rm P}_{\mathbb B(M)}\Big(\tilde c_{k-1}+\frac{1}{B_k^c}\sum_{\xi\in \mathcal{B}_k^c}\big(C(x_k,\xi)-C(x_{k-1},\xi)\big)\Big),\notag
\end{align}
where $\mathcal B_k^f$, $\mathcal B_k^J$ and $\mathcal B_k^c$, $k\ge1$ are independently and randomly generated sample sets with $|\mathcal B_k^f| = B_k^f$, $|\mathcal B_k^J| = B_k^J$ and $|\mathcal B_k^c| = B_k^c$. {We are now ready to present }the complete algorithmic framework is summarized in Algorithm \ref{alg1}.

\begin{algorithm}[ht]
  \caption{ } 
  \label{alg1}
  \begin{algorithmic}[1] 
    \REQUIRE Initial iterate $x_1$, initial merit parameter $\rho_0$, positive integers $K$ and $\tau$.
    \FOR{$k=1,\ldots,K$ }
    \IF{{$(k-1) \text{ mod } \tau =0$}}
		\STATE Compute $\tilde\nabla f_k$, $\tilde\nabla c_k$ and $\tilde c_k$ through (\ref{eq:spider0}).
		\ELSE
		\STATE Compute $\tilde\nabla f_k$, $\tilde\nabla c_k$ and $\tilde c_k$ through (\ref{eq:spider-k}).
	\ENDIF
    \STATE Compute \(\tilde{s}_k\) via \eqref{s_k}. 
    \STATE Compute \(\eta_k\) via \eqref{eq:eta-update} with \(\rho_k\)  computed by \eqref{eq:rho-update}.
    \STATE Update \(x_{k+1} = x_k + \eta_k \tilde{s}_k\).
    \ENDFOR 
    \ENSURE $x_{R}$ where $R\in\{1,\ldots,K\}$ is uniformly chosen at random.
  \end{algorithmic} 
\end{algorithm}

For the analysis of the SPIDER-type estimators, we define the epoch-start filtration $\bar{\mathcal F}_k := \mathcal F_{q(k)}$, along with the conditional probability $\bar{\mathbb P}_k(\cdot) := \mathbb P(\cdot \mid \bar{\mathcal F}_k)$ and expectation $\bar{\mathbb E}_k[\cdot] := \mathbb E[\cdot \mid \bar{\mathcal F}_k]$. By definition, {it holds that} $\bar{\mathcal F}_k \subseteq \mathcal F_k$ for all $k \ge 1$, where equality holds exactly {when} $k = q(k)$. Conditioning on $\bar{\mathcal F}_k$ fixes the information available at the beginning of the epoch, which facilitates bounding the accumulated estimation error over that epoch. {To faciliate the subsequent theoretical analysis, we first impose the following assumption on stochastic estimators. Eventaully, we will realize this assumption through proper sampling strategies.} 
\begin{assumption}\label{a5}
    There exist positive constants $\{\tilde\sigma_k\}_{k\ge 1}\subset (0,1)$ such that for any $k\ge1$,
    \be\label{error-bnd}
     \bar\E_k[\|\tilde \nabla f_k - \nabla f_k\|^2] \leq \tilde\sigma_k^2,\ \ ~\bar\E_k[\|\tilde \nabla c_k - \nabla c_k\|^2] \leq  \tilde\sigma_k^2,\ \ ~\bar\E_k[\|\tilde c_k - c_k\|^2] \leq \tilde\sigma_k^2.
    \ee
\end{assumption}

\section{Theoretical Analysis} \label{sec:ana}
%\subsection{Preliminaries for the analysis}
%Before proceeding to the main theoretical analysis, this subsection establishes two essential properties, 

\subsection{Directional Perturbation Bounds}
In this subsection, we derive a perturbation bound for the stochastic search direction $\tilde s_k$ with respect to its deterministic counterpart $s_k$. The main difficulty is that, although the true Jacobian satisfies a uniform lower singular value bound under Assumption \ref{a4}, the stochastic Jacobian may not remain uniformly nondegenerate at every iteration. To handle this issue, we partition the analysis into the good event and the bad event according to whether the stochastic Jacobian preserves a uniform lower singular value. Specifically, we define the good event
\[
\mathcal{G}_k:=\left\{\mu_{\min}(\tilde\nabla c_k)\ge \frac{\nu}{2}\right\}.
\]
The following lemma provides a conditional probability bound for the corresponding bad event \(\mathcal{G}_k^c\).

\begin{lemma}\label{lem:bad-event}
Under Assumptions \ref{a4} and \ref{a5}, it holds that
%\begin{equation}\label{eq:bad-event-prob}
\(\bar{\mathbb{P}}_k(\mathcal{G}_k^c)\le \frac{4}{\nu^2}\tilde\sigma_{k}^2.\)
%\end{equation}
\end{lemma}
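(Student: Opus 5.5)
The plan is to combine Weyl's inequality for singular values with Markov's inequality applied to the Jacobian estimation error. Recall from Assumption \ref{a4} that $\mu_{\min}(\nabla c_k)\ge \nu$. For $n\times m$ matrices $A,B$, Weyl's perturbation inequality for singular values gives $|\mu_i(A)-\mu_i(B)|\le \|A-B\|$ for every $i$. Applying it with $A=\tilde\nabla c_k$, $B=\nabla c_k$ and $i$ the index of the smallest singular value yields
\[
\mu_{\min}(\tilde\nabla c_k)\ \ge\ \mu_{\min}(\nabla c_k)-\|\tilde\nabla c_k-\nabla c_k\|\ \ge\ \nu-\|\tilde\nabla c_k-\nabla c_k\|.
\]
If $\mu_{\min}(\tilde\nabla c_k)<\nu/2$, this forces $\|\tilde\nabla c_k-\nabla c_k\|>\nu/2$. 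Hence we have the event inclusion $\mathcal G_k^c\subseteq\{\|\tilde\nabla c_k-\nabla c_k\|>\nu/2\}$.

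Next we bound the probability of the larger event. Applying the conditional Markov inequality under $\bar{\mathbb P}_k$ to the nonnegative random variable $\|\tilde\nabla c_k-\nabla c_k\|^2$ gives
\[
\bar{\mathbb P}_k(\mathcal G_k^c)\le \bar{\mathbb P}_k\big(\|\tilde\nabla c_k-\nabla c_k\|^2>\nu^2/4\big)\le \frac{4}{\nu^2}\,\bar{\mathbb E}_k\big[\|\tilde\nabla c_k-\nabla c_k\|^2\big]\le \frac{4}{\nu^2}\tilde\sigma_k^2,
\]
where the last step uses the Jacobian bound in \eqref{error-bnd} of Assumption \ref{a5}.

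The only points needing care are the following.
\begin{itemize}
\item \emph{Which norm Assumption \ref{a5} uses.} The notation section says $\|\cdot\|$ is the spectral norm for matrices, so Weyl's inequality applies directly. If instead the Frobenius norm were intended, the argument is unchanged, since the spectral norm is bounded by the Frobenius norm.
\item \emph{Dimensions.} The smallest singular value is understood as the $m$-th singular value of an $n\times m$ matrix with $m\le n$. This is implicit in Assumption \ref{a4}, and Weyl's inequality holds index-wise for such matrices.
\end{itemize}
No obstacle beyond these is expected.
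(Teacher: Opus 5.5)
Your proposal is correct and follows the paper's proof essentially verbatim: Weyl's inequality combined with Assumption \ref{a4} gives the inclusion $\mathcal G_k^c\subseteq\{\|\tilde\nabla c_k-\nabla c_k\|\ge \nu/2\}$, and the conditional Markov inequality together with Assumption \ref{a5} then gives the bound. Using the strict threshold $>\nu/2$ instead of the paper's $\ge \nu/2$ changes nothing, and your remark that the spectral norm is dominated by the Frobenius norm correctly covers either reading of the matrix norm in \eqref{error-bnd}.
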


\begin{proof}
It follows from Weyl's inequality (Lemma \ref{lem:weyl}) that
\[\mu_{\min}(\tilde{\nabla} c_k)\ge\mu_{\min}(\nabla c_k)-\|\tilde{\nabla} c_k-\nabla c_k\|.\]
Then using Assumption \ref{a4}, we have $\mu_{\min}(\tilde{\nabla} c_k)\ge\nu-\|\tilde{\nabla} c_k-\nabla c_k\|$. Therefore, it holds that
\[
\mathcal{G}_k^c
=
\left\{\mu_{\min}(\tilde{\nabla} c_k)<\frac{\nu}{2}\right\}
\subseteq
\left\{\|\tilde{\nabla} c_k-\nabla c_k\|\ge \frac{\nu}{2}\right\}.
\]
Applying Markov's inequality and Assumption~\ref{a5} yields
\begin{align*}
    \bar{\mathbb{P}}_k(\mathcal{G}_k^c)\le
    \bar{\mathbb{P}}_k\left(\|\tilde{\nabla} c_k-\nabla c_k\|^2\ge \frac{\nu^2}{4}\right)
   \le\frac{4}{\nu^2}\,\bar{\mathbb{E}}_k[\|\tilde{\nabla} c_k-\nabla c_k\|^2]\le\frac{4}{\nu^2}\tilde\sigma_{k}^2.
\end{align*}
This completes the proof.
\end{proof}

We now proceed to establish a perturbation bound between the stochastic search direction $\tilde s_k$ and the {true} direction $s_k$. %In what follows, we adopt the convention that an inequality stated on an event $\mathcal A$ is understood after multiplying both sides by the indicator $\mathbf 1_{\mathcal A}$ and taking the expectation.
\begin{lemma}\label{le:s-error}
     Suppose that Assumptions \ref{a1}-\ref{a2}, {\ref{a4}, and \ref{a5}} hold. Then for any $k\geq1$, it holds that
    \begin{align}\label{sk-sk}
       \bar{\mathbb{E}}_k[\|s_k-\tilde s_k\|^2]&\leq C_s^2\tilde\sigma_k^2,
    \end{align}
    where {$C_s:=[2C_p^2+4w^2(M^2+L_c^2)+32(G^2+w^2M^2L_c^2)/{\nu^2}]^{\frac{1}{2}}$ with $C_p:=[5(1+(17G^2L_c^2+16L_c^4)/\nu^4+64G^2L_c^6/\nu^8)]^{\frac{1}{2}}$}.
\end{lemma}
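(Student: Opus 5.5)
We split the expectation over the good event $\mathcal G_k$ and the bad event $\mathcal G_k^c$, writing
\[
\bar{\mathbb E}_k[\|s_k-\tilde s_k\|^2]=\bar{\mathbb E}_k[\|s_k-\tilde s_k\|^2\mathbf 1_{\mathcal G_k}]+\bar{\mathbb E}_k[\|s_k-\tilde s_k\|^2\mathbf 1_{\mathcal G_k^c}],
\]
and decompose the direction error as
\[
s_k-\tilde s_k=-({\rm P}_{V_k}\nabla f_k-{\rm P}_{\widetilde V_k}\tilde\nabla f_k)-w(\nabla c_kc_k-\tilde\nabla c_k\tilde c_k),
\]
so that $\|s_k-\tilde s_k\|^2\le 2\|{\rm P}_{V_k}\nabla f_k-{\rm P}_{\widetilde V_k}\tilde\nabla f_k\|^2+2w^2\|\nabla c_kc_k-\tilde\nabla c_k\tilde c_k\|^2$. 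The structure of $C_s$ suggests exactly this: a projection term on $\mathcal G_k$ giving $2C_p^2$, a normal-component term giving $4w^2(M^2+L_c^2)$, and a bad-event term giving $32(G^2+w^2M^2L_c^2)/\nu^2$.

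\emph{Normal component (all outcomes).} We write $\nabla c_kc_k-\tilde\nabla c_k\tilde c_k=(\nabla c_k-\tilde\nabla c_k)c_k+\tilde\nabla c_k(c_k-\tilde c_k)$. Since $\|c_k\|\le M$ by Assumption \ref{a1} and $\|\tilde\nabla c_k\|\le\|\tilde\nabla c_k\|_F\le L_c$ by truncation, Assumption \ref{a5} gives the bound $2(M^2+L_c^2)\tilde\sigma_k^2$. Multiplying by $2w^2$ yields the $4w^2(M^2+L_c^2)$ contribution.

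\emph{Tangential component on $\mathcal G_k$.} Here both $\nabla c_k$ and $\tilde\nabla c_k$ have smallest singular value at least $\nu/2$, so $(\tilde\nabla c_k^\top)^\dagger=\tilde\nabla c_k(\tilde\nabla c_k^\top\tilde\nabla c_k)^{-1}$, with $\|(\tilde\nabla c_k^\top\tilde\nabla c_k)^{-1}\|\le 4/\nu^2$. We write
\[
{\rm P}_{V_k}\nabla f_k-{\rm P}_{\widetilde V_k}\tilde\nabla f_k={\rm P}_{\widetilde V_k}(\nabla f_k-\tilde\nabla f_k)+({\rm P}_{V_k}-{\rm P}_{\widetilde V_k})\nabla f_k .
\]
The first piece is bounded by $\|\nabla f_k-\tilde\nabla f_k\|$. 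For the second, ${\rm P}_{V}=I-A(A^\top A)^{-1}A^\top$, and we expand the difference telescopically:
\[
A(A^\top A)^{-1}A^\top-\tilde A(\tilde A^\top\tilde A)^{-1}\tilde A^\top=(A-\tilde A)(A^\top A)^{-1}A^\top+\tilde A[(A^\top A)^{-1}-(\tilde A^\top\tilde A)^{-1}]A^\top+\tilde A(\tilde A^\top\tilde A)^{-1}(A-\tilde A)^\top .
\]
For the middle term we use $(A^\top A)^{-1}-(\tilde A^\top\tilde A)^{-1}=(A^\top A)^{-1}(\tilde A^\top\tilde A-A^\top A)(\tilde A^\top\tilde A)^{-1}$ and $\|\tilde A^\top\tilde A-A^\top A\|\le 2L_c\|A-\tilde A\|$. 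Each term is then a constant, built from $G$, $L_c$ and powers of $1/\nu$, times $\|A-\tilde A\|$. Summing the (five or so) pieces with $(\sum_{i=1}^5 a_i)^2\le5\sum_{i=1}^5 a_i^2$ produces a constant of the shape $C_p^2=5(1+\dots)$, with the $\nu^{-4}$ and $\nu^{-8}$ terms coming from the single and double inverse factors. We bound the Frobenius error by the spectral one via $\|\cdot\|\le\|\cdot\|_F$, and apply Assumption \ref{a5} to conclude $\bar{\mathbb E}_k[\cdot\,\mathbf 1_{\mathcal G_k}]\le C_p^2\tilde\sigma_k^2$ for this piece.

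\emph{Bad event.} On $\mathcal G_k^c$ we use crude boundedness instead of perturbation: $\|{\rm P}_{V_k}\nabla f_k\|\le G$, $\|{\rm P}_{\widetilde V_k}\tilde\nabla f_k\|\le G$ (truncation), $\|\nabla c_kc_k\|\le L_cM$, and $\|\tilde\nabla c_k\tilde c_k\|\le L_cM$. This gives $\|s_k-\tilde s_k\|^2\le 2(2G)^2+2w^2(2L_cM)^2=8(G^2+w^2M^2L_c^2)$. Multiplying by $\bar{\mathbb P}_k(\mathcal G_k^c)\le 4\tilde\sigma_k^2/\nu^2$ from Lemma \ref{lem:bad-event} gives $32(G^2+w^2M^2L_c^2)\tilde\sigma_k^2/\nu^2$. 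For the good-event tangential part we drop the indicator after bounding, and we take the normal-component bound over all outcomes.

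The main obstacle is the pseudoinverse perturbation on $\mathcal G_k$, which requires careful bookkeeping of the constants so that they match $C_p$. Conceptually, however, it is routine once the good event guarantees invertibility of $\tilde\nabla c_k^\top\tilde\nabla c_k$.
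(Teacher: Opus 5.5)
Your proposal is correct and follows essentially the paper's argument: the same $\mathcal G_k/\mathcal G_k^c$ split, the same $2\|\cdot\|^2+2w^2\|\cdot\|^2$ decomposition, the resolvent identity for the inverse Gram matrices on $\mathcal G_k$, and the crude bound $8(G^2+w^2M^2L_c^2)$ multiplied by the probability estimate of Lemma~\ref{lem:bad-event} on $\mathcal G_k^c$. The only difference is bookkeeping. You group the gradient-error terms as ${\rm P}_{\widetilde V_k}(\nabla f_k-\tilde\nabla f_k)$ and use that the projection is nonexpansive, which gives four pieces instead of the paper's five (the paper's fifth piece is what contributes the $16L_c^4/\nu^4$ term in $C_p^2$), so your constant is dominated by $C_p^2$ and the stated bound follows.
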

\begin{proof}
    On the good event $\mathcal{G}_k$, after localization by the indicator $\mathbf 1_{\mathcal G_k}$, the Sherman–Morrison–Woodbury formula (Lemma \ref{lm:smw}) implies that
    \begin{align*}
        &\bar{\E}_k[\| (\tilde \nabla c_k^\top \tilde \nabla c_k)^{-1} - (\nabla c_k^\top \nabla c_k)^{-1} \|^2\mathbf 1_{\mathcal G_k}]\\
        &=\bar{\E}_k\Big[\big\|(\tilde \nabla c_k^{\top} \tilde \nabla c_k)^{-1}\big[\nabla c_k^{\top} \nabla c_k - \tilde \nabla c_k^{\top} \tilde \nabla c_k\big](\nabla c_k^{\top} \nabla c_k)^{-1}\big\|^2\mathbf 1_{\mathcal G_k}\Big]\\
        &\leq \bar{\E}_k\Big[\big\|(\tilde \nabla c_k^{\top} \tilde \nabla c_k)^{-1}\big\|^2 \big\|\nabla c_k^{\top} \nabla c_k - \tilde \nabla c_k^{\top} \tilde \nabla c_k\big\|^2\big\|(\nabla c_k^{\top} \nabla c_k)^{-1}\big\|^2\mathbf 1_{\mathcal G_k}\Big]\\
        &\leq \frac{32}{\nu^8} \bar{\E}_k\big[(\|\nabla c_k\|^2+\|\tilde \nabla c_k\|^2)\|\tilde \nabla c_k - \nabla c_k\|^2\mathbf 1_{\mathcal G_k}\big],
    \end{align*}
    where the last inequality comes from the definition of $\mathcal{G}_k$ and Assumption \ref{a4}.
    Under Assumptions \ref{a1}-\ref{a2} and \ref{a5}, together with the truncation bound $\|\tilde\nabla c_k\| \le L_c$, we obtain
   \be\begin{aligned}\label{pv-pv}
        &\bar{\E}_k[\|{\rm P}_{V_k}\nabla f_k-{\rm P}_{\widetilde V_k}\tilde{\nabla} f_k\|^2\mathbf 1_{\mathcal G_k}]\\
        &=\bar{\E}_k[\|\nabla f_k-\nabla c_k(\nabla c_k^\top\nabla c_k)^{-1}\nabla c_k^\top\nabla f_k-\tilde{\nabla} f_k+\tilde\nabla c_k(\tilde\nabla c_k^\top\tilde\nabla c_k)^{-1}\tilde\nabla c_k^\top\tilde\nabla f_k\|^2\mathbf 1_{\mathcal G_k}]\\
        &\leq5\bar{\E}_k[\|\nabla f_k-\tilde{\nabla} f_k\|^2\mathbf 1_{\mathcal G_k}]+5\bar{\E}_k[\|\nabla c_k-\tilde\nabla c_k\|^2\|(\nabla c_k^\top\nabla c_k)^{-1}\nabla c_k^\top\nabla f_k\|^2\mathbf 1_{\mathcal G_k}]\\
        &\quad+5\bar{\E}_k[\|\tilde\nabla c_k\|^2\|(\nabla c_k^\top\nabla c_k)^{-1}-(\tilde\nabla c_k^\top\tilde\nabla c_k)^{-1}\|^2\|\nabla c_k^\top\nabla f_k\|^2\mathbf 1_{\mathcal G_k}]\\
        &\quad+5\bar{\E}_k[\|\tilde\nabla c_k(\tilde\nabla c_k^\top\tilde\nabla c_k)^{-1}\|^2\|\tilde\nabla c_k-\nabla c_k\|^2\|\nabla f_k\|^2\mathbf 1_{\mathcal G_k}]\\
        &\quad+5\bar{\E}_k[\|\tilde\nabla c_k(\tilde\nabla c_k^\top\tilde\nabla c_k)^{-1}\tilde\nabla c_k^\top\|^2\|\tilde\nabla f_k-\nabla f_k\|^2\mathbf 1_{\mathcal G_k}]\\
        &\leq 5\Big(1+\frac{G^2L_c^2}{\nu^4}+\frac{64G^2L_c^6}{\nu^8}+\frac{16G^2L_c^2}{\nu^4}+\frac{16L_c^4}{\nu^4}\Big)\tilde\sigma_k^2:= C_p^2\tilde\sigma_k^2.
    \end{aligned}\ee
   Then appealing to the Cauchy-Schwarz inequality and the truncation bound $\|\tilde c_k\| \le M$, we arrive at
    \begin{align*}
        &\bar{\E}_k[\|\nabla c_k  c_k -\tilde \nabla c_k \tilde c_k\|^2\mathbf 1_{\mathcal G_k}]\\
        &\leq2\bar{\E}_k[\|\nabla c_k\|^2\|\tilde c_k-c_k\|^2\mathbf 1_{\mathcal G_k}]+2\bar{\E}_k[\|\tilde\nabla c_k-\nabla c_k\|^2\|\tilde c_k\|^2\mathbf 1_{\mathcal G_k}]\leq 2(M^2+L_c^2)\tilde\sigma_k^2.
    \end{align*}
  The contribution from the good event is therefore bounded by
    \begin{align*}
        \bar{\E}_k[\|s_k-\tilde{s}_k\|^2\mathbf{1}_{\mathcal{G}_k}]&\leq2\bar{\E}_k[\|{\rm P}_{V_k}\nabla f_k-{\rm P}_{\widetilde V_k}\tilde{\nabla} f_k\|^2\mathbf{1}_{\mathcal{G}_k}]+2w^2\bar{\E}_k[\|\nabla c_k  c_k - \tilde \nabla c_k \tilde c_k\|^2\mathbf{1}_{\mathcal{G}_k}]\\
        &\leq(2C_p^2+4w^2M^2+4w^2L_c^2)\tilde\sigma_k^2.
    \end{align*}
    On the bad event $\mathcal{G}_k^c$, it follows from Assumptions \ref{a1}-\ref{a2} and the truncation bounds that
    \begin{align}\label{Pv-bad}
       \bar{\E}_k[\|s_k-\tilde{s}_k\|^2\mathbf{1}_{\mathcal{G}_k^c}]&\leq2\bar{\E}_k[\|{\rm P}_{V_k}\nabla f_k-{\rm P}_{\widetilde V_k}\tilde{\nabla} f_k\|^2\mathbf{1}_{\mathcal{G}_k^c}]+2w^2\bar{\E}_k[\|\nabla c_k  c_k - \tilde \nabla c_k \tilde c_k\|^2\mathbf{1}_{\mathcal{G}_k^c}]\notag\\
       &\leq(8G^2+8w^2M^2L_c^2)\bar{\mathbb{P}}_k(\mathcal{G}_k^c).
    \end{align}
   Combining the bounds derived on $\mathcal{G}_k$ and $\mathcal{G}_k^c$ with the probability estimate in Lemma \ref{lem:bad-event}, we derive
    \begin{align*}
       \bar{\mathbb{E}}_k [\|\tilde s_k-s_k\|^2]
     &=\bar{\mathbb{E}}_k\!\left[\|\tilde s_k-s_k\|^2 \mathbf{1}_{\mathcal{G}_k}\right]+\bar{\mathbb{E}}_k\!\left[\|\tilde s_k-s_k\|^2 \mathbf{1}_{\mathcal{G}_k^c}\right]\\
     &\leq  (2C_p^2+4w^2M^2+4w^2L_c^2)\tilde\sigma_k^2+\frac{32(G^2+w^2M^2L_c^2)}{\nu^2}\tilde\sigma_{k}^2:=C_s^2\tilde\sigma_k^2,
    \end{align*}
    which completes the proof.
\end{proof}

\subsection{Convergence analysis}
In this subsection, we study the convergence properties of Algorithm \ref{alg1}. We first observe that the update rule \eqref{eq:rho-update} guarantees a uniform upper bound on the merit parameter. On the good event $\mathcal G_k=\{\mu_{\min}(\tilde\nabla c_k)\ge \nu/2\}$, we have $\mu_{\min}(\tilde\nabla c_k)^2 \ge \nu^2/4$. On the complementary event $\mathcal G_k^c$, the denominator is trivially bounded below by the constant $2w\nu^2$. Consequently, the candidate update is always bounded above by $4\bigl(4L_\lambda^2+2(w^2L_c^2+1)\bigr)/(w\nu^2)$. Since the sequence $\{\rho_k\}$ is non-decreasing, it follows immediately by induction that
\begin{align*}
    {\rho_0\leq\,}\rho_k \le \rho_{\max} := \max\left\{\rho_0, \frac{4\bigl(4L_\lambda^2+2(w^2L_c^2+1)\bigr)}{w\nu^2}\right\}, \quad \forall k \ge 1.
\end{align*}
As a direct consequence, \(L_k\) is uniformly bounded from below and above. More precisely, let
{\[L_{\min}:=L_{\rho_0}\quad \text{and} \quad L_{\max}:=L_{\rho_{\max}},\]
with $L_\rho$ defined in Proposition~\ref{le:lambda}.}
%\(L_{\min}\) and \(L_{\max}\) denote the values of \(L_k\) associated with $\rho_0$ and $\rho_{\max}$, respectively.
Given that $\rho_k \in [\rho_0, \rho_{\max}]$, the update rule \eqref{eq:eta-update} directly yields
\[\eta_{\min}:=\frac{1}{4L_{\max}}
\le\eta_k
\le\frac{1}{4L_{\min}}
=:\eta_{\max},\quad \forall k\ge 1.\]
Equipped with these uniform bounds, we are now ready to characterize the convergence behavior of the sequence generated by Algorithm \ref{alg1}. We first establish a descent property of $\calL(x,\rho)$, which plays a central role in the subsequent analysis.

\begin{lemma}
    Suppose Assumptions {\ref{a1}-\ref{a4} and \ref{a5}} hold. Then for any $k\geq1$, we have
    \be\begin{aligned}\label{eq:descent-L}
 \E[\calL(x_{k+1},\rho_k)]
 &\leq\E[\calL(x_k,\rho_k)]-\E[\eta_k(\|{\rm P}_{V_k}{\nabla} f_k\|^2+(w^2L_c^2+1)\|c_k\|^2)]\\
 &\quad+\E\big[\big(\eta_k\|{\rm P}_{V_k}{\nabla} f_k\|+L_c\eta_k\rho_k\|c
        _k\|\big)(\|\Delta P_k\|+w\|\Delta c_k\|)\big]\\
        &\quad+wM\E[\eta_k\rho_k \| c_k \|\|\Delta J_k\|^2]+\frac{1}{4}\E[(2\eta_k^2L_k+\eta_k)\|\tilde{s}_k\|^2],
\end{aligned}\ee
where $\Delta P_k:={\rm P}_{\widetilde V_k}\tilde{\nabla} f_k-{\rm P}_{V_k}{\nabla} f_k$, $\Delta c_k := \tilde\nabla c_k \tilde c_k - \nabla c_k c_k$ and $\Delta J_k:=\tilde\nabla c_k-\nabla c_k$.
\end{lemma}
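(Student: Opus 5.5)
The plan is to run the standard descent-lemma argument for the smooth merit function, with the inner product $\langle\nabla_x\calL(x_k,\rho_k),\tilde s_k\rangle$ split into exact parts and perturbation parts. By Proposition \ref{le:lambda}, $\nabla_x\calL(\cdot,\rho_k)$ is $L_k$-Lipschitz with $L_k=L_{\rho_k}$. Since $\rho_k,\eta_k$ are $\mathcal F_k$-measurable (they depend only on $\tilde\nabla c_k$ and the past), I can apply
\[
\calL(x_{k+1},\rho_k)\le \calL(x_k,\rho_k)+\eta_k\langle \nabla_x\calL(x_k,\rho_k),\tilde s_k\rangle+\tfrac{L_k\eta_k^2}{2}\|\tilde s_k\|^2
\]
pathwise, and take total expectations only at the very end.

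First I compute the gradient. Differentiating \eqref{ALM} gives
\[
\nabla_x\calL(x_k,\rho_k)=\nabla f_k+\nabla c_k\lambda(x_k)+\nabla\lambda(x_k)c_k+\rho_k\nabla c_k c_k={\rm P}_{V_k}\nabla f_k+\nabla\lambda(x_k)c_k+\rho_k\nabla c_kc_k,
\]
using $\nabla f+\nabla c\lambda={\rm P}_{V}\nabla f$. I then write $\tilde s_k=-{\rm P}_{V_k}\nabla f_k-\Delta P_k-w\nabla c_kc_k-w\Delta c_k$ and treat the three pieces of the gradient in turn.

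\emph{(i) Tangential piece.} Since ${\rm P}_{V_k}\nabla f_k\perp \nabla c_kc_k$, we get $\langle {\rm P}_{V_k}\nabla f_k,\tilde s_k\rangle\le -\|{\rm P}_{V_k}\nabla f_k\|^2+\|{\rm P}_{V_k}\nabla f_k\|(\|\Delta P_k\|+w\|\Delta c_k\|)$.

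\emph{(ii) Multiplier-derivative piece.} By Young's inequality, $\langle\nabla\lambda(x_k)c_k,\tilde s_k\rangle\le L_\lambda^2\|c_k\|^2+\tfrac14\|\tilde s_k\|^2$. The $\tfrac14\eta_k\|\tilde s_k\|^2$ from this step, added to the smoothness term $\tfrac{L_k\eta_k^2}{2}\|\tilde s_k\|^2$, gives exactly $\tfrac14(2\eta_k^2L_k+\eta_k)\|\tilde s_k\|^2$.

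\emph{(iii) Penalty piece.} This is the delicate part. I will use the orthogonality ${\rm P}_{\widetilde V_k}\tilde\nabla f_k\perp\tilde\nabla c_k v$ for all $v$, which gives $\langle\nabla c_kc_k,{\rm P}_{\widetilde V_k}\tilde\nabla f_k\rangle=-\langle \Delta J_kc_k,{\rm P}_{\widetilde V_k}\tilde\nabla f_k\rangle$. For the normal part I will rewrite $\rho_kw\langle\nabla c_kc_k,\tilde\nabla c_k\tilde c_k\rangle$ so that a term $\rho_kw\|\tilde\nabla c_kc_k\|^2\ge \rho_kw\chi_k\|c_k\|^2$ appears on $\mathcal G_k$. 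On $\mathcal G_k^c$ I instead keep $\rho_kw\|\nabla c_kc_k\|^2\ge \rho_kw\nu^2\|c_k\|^2$ by Assumption \ref{a4}, and compare with $\chi_k=2\nu^2$.

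In both cases the choice \eqref{eq:rho-update} gives $\rho_kw\chi_k\ge 4L_\lambda^2+2w^2L_c^2+2$. This leaves $-(w^2L_c^2+1)\|c_k\|^2$ after absorbing the $L_\lambda^2\|c_k\|^2$ from (ii); on the bad event there is the factor $\tfrac12$ loss, which the factor $2$ in $\chi_k=2\nu^2$ is designed to compensate. The remaining cross terms are bounded using $\|\nabla c_k\|\le L_c$ and $\|\tilde c_k\|\le M$:
\begin{itemize}
\item terms linear in the errors, by $L_c\rho_k\|c_k\|(\|\Delta P_k\|+w\|\Delta c_k\|)$;
\item the term from swapping $\nabla c_k$ for $\tilde\nabla c_k$ in the squared normal component, by $wM\rho_k\|c_k\|\|\Delta J_k\|^2$.
\end{itemize}

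\emph{Main obstacle.} I expect step (iii) to be the main difficulty. The lower bound must be expressed through the stochastic singular value $\chi_k$, because that is what $\rho_k$ is tuned to, while the true gradient involves $\nabla c_k$. The exact algebraic splitting of $\langle\nabla c_kc_k,\tilde\nabla c_k\tilde c_k\rangle$ has to produce error terms only of the shapes appearing in \eqref{eq:descent-L}, namely linear in $\|\Delta P_k\|,\|\Delta c_k\|$ or quadratic in $\|\Delta J_k\|$. The first splitting I would try is
\[
\langle\nabla c_kc_k,\tilde\nabla c_k\tilde c_k\rangle=\|\tilde\nabla c_k c_k\|^2-\langle \Delta J_kc_k,\tilde\nabla c_kc_k\rangle+\langle\nabla c_kc_k,\tilde\nabla c_k(\tilde c_k-c_k)\rangle,
\]
followed by a Young-type split of the middle term. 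If that produces the wrong shape, I would fall back to $\langle\nabla c_kc_k,\nabla c_kc_k\rangle+\langle\nabla c_kc_k,\Delta c_k\rangle$ and relate $\|\nabla c_k c_k\|$ to $\|\tilde\nabla c_kc_k\|$ via $\|\Delta J_k\|$.

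Once the pathwise inequality is in place, multiplying through by $\eta_k$, collecting terms, and taking total expectation yields \eqref{eq:descent-L}.
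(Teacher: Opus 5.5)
Your skeleton matches the paper's proof. It uses the pathwise descent inequality with constant $L_k$, the decomposition $\nabla\calL(x_k,\rho_k)={\rm P}_{V_k}\nabla f_k+\nabla\lambda_kc_k+\rho_k\nabla c_kc_k$, Young's inequality on the $\nabla\lambda_k$ term, and the $\mathcal G_k/\mathcal G_k^c$ split calibrated by $\chi_k$. Your version of (ii), giving $L_\lambda^2\|c_k\|^2+\frac14\|\tilde s_k\|^2$, is even slightly sharper than the paper's $2L_\lambda^2\|c_k\|^2$.

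\textbf{The gap is in the tangential part of (iii).} The identity $\langle\nabla c_kc_k,{\rm P}_{\widetilde V_k}\tilde\nabla f_k\rangle=-\langle\Delta J_kc_k,{\rm P}_{\widetilde V_k}\tilde\nabla f_k\rangle$ is true. However, the only way to bound it gives $\rho_k\|c_k\|\|\Delta J_k\|\|{\rm P}_{\widetilde V_k}\tilde\nabla f_k\|\le G\rho_k\|c_k\|\|\Delta J_k\|$, which is \emph{linear} in $\|\Delta J_k\|$. Nothing in \eqref{eq:descent-L} absorbs such a term:
\begin{itemize}
\item the only $\Delta J_k$ term there is the quadratic $wM\rho_k\|c_k\|\|\Delta J_k\|^2$, which is smaller when $\|\Delta J_k\|$ is small;
\item the $\frac14\eta_k\|\tilde s_k\|^2$ allowance is already used up in (ii);
\item a Young split against $\|c_k\|^2$ leaves $\rho_k^2\|\Delta J_k\|^2$ with no factor $\|c_k\|$.
\end{itemize}
So your claim that the linear cross terms are bounded by $L_c\rho_k\|c_k\|(\|\Delta P_k\|+w\|\Delta c_k\|)$ does not follow from the identity you chose.

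\textbf{The fix.} Use the other orthogonality, namely that ${\rm P}_{V_k}\nabla f_k$ is orthogonal to the column space of $\nabla c_k$. Your own decomposition $\tilde s_k=-{\rm P}_{V_k}\nabla f_k-\Delta P_k-w\nabla c_kc_k-w\Delta c_k$ gives this directly:
\[
\rho_k\langle\nabla c_kc_k,\tilde s_k\rangle=-\rho_k\langle\nabla c_kc_k,\Delta P_k+w\Delta c_k\rangle-\rho_kw\|\nabla c_kc_k\|^2 .
\]
The first term is at most $L_c\rho_k\|c_k\|(\|\Delta P_k\|+w\|\Delta c_k\|)$. This is exactly the paper's route.

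It also means you should commit to your fallback for the normal part. Your first splitting leaves $\langle\nabla c_kc_k,\tilde\nabla c_k(\tilde c_k-c_k)\rangle$, which is controlled only through $\|\tilde c_k-c_k\|$ or through $\|\Delta c_k\|+\|\Delta J_k\|\|c_k\|$. Both are again the wrong shape, and on $\mathcal G_k^c$ you need $\|\nabla c_kc_k\|^2$ anyway. So keep $\rho_kw\|\nabla c_kc_k\|^2$ and argue as follows:
\begin{itemize}
\item on $\mathcal G_k$, use $\|\nabla c_kc_k\|^2\ge\frac12\|\tilde\nabla c_kc_k\|^2-\|\Delta J_k\|^2\|c_k\|^2$ together with $\|c_k\|\le M$;
\item on $\mathcal G_k^c$, use $\|\nabla c_kc_k\|^2\ge\nu^2\|c_k\|^2=\frac{\chi_k}{2}\|c_k\|^2$.
\end{itemize}
In both cases you obtain $\frac12\rho_kw\chi_k\|c_k\|^2\ge(2L_\lambda^2+w^2L_c^2+1)\|c_k\|^2$, which absorbs the $L_\lambda^2\|c_k\|^2$ from (ii).

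A minor point: $\rho_k$ and $\eta_k$ depend on the iteration-$k$ samples through $\tilde\nabla c_k$, so they are not $\mathcal F_k$-measurable in the paper's convention. This is harmless, since you apply the descent inequality pathwise.
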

\begin{proof}
    It follows from the Lipschitz continuity of $\nabla \calL(x,\rho)$ {in $x$} established in Proposition \ref{le:lambda} and $x_{k+1}=x_k+\eta_k\tilde{s}_k$ that 
\begin{align}\label{L-expand}
\E[\calL(x_{k+1},\rho_k)]&\leq\E[\calL(x_k,\rho_k)]+\E[\langle\nabla\calL(x_k,\rho_k),x_{k+1}-x_k\rangle]+\frac{1}{2}\E[{\eta_k^2L_k}\|\tilde{s}_k\|^2]\notag\\
    &=\E[\calL(x_k,\rho_k)]+\E[\langle\nabla\calL(x_k,\rho_k),\eta_k\tilde{s}_k\rangle]+\frac{1}{2}\E[{\eta_k^2L_k}\|\tilde{s}_k\|^2],
\end{align}
For the second term, one has 
\[ -\E[\langle\nabla\calL(x_k,\rho_k),\eta_k\tilde{s}_k\rangle]=\E[\eta_k\langle\nabla\calL(x_k,\rho_k),{\rm P}_{\widetilde V_k}\tilde{\nabla} f_k + w\tilde{\nabla} c_k \tilde c_k\rangle]\]
from (\ref{s_k}). Recalling the definition of $\Delta P_k$, we have
    \begin{align*}
       &\langle\nabla\calL(x_k,\rho_k),{\rm P}_{\widetilde V_k}\tilde{\nabla} f_k\rangle\\
       &=\langle{\nabla} f_k,{\rm P}_{\widetilde V_k}\tilde{\nabla} f_k\rangle+\langle\nabla\lambda_kc_k,{\rm P}_{\widetilde V_k}\tilde{\nabla} f_k\rangle+\langle\lambda_k,\nabla c_k^\top{\rm P}_{\widetilde V_k}\tilde{\nabla} f_k\rangle+\rho_k\langle\nabla c_kc_k,{\rm P}_{\widetilde V_k}\tilde{\nabla} f_k\rangle\\
       &=\langle{\nabla} f_k,{\rm P}_{\widetilde V_k}\tilde{\nabla} f_k\rangle+\langle\nabla\lambda_kc_k,{\rm P}_{\widetilde V_k}\tilde{\nabla} f_k\rangle+\langle\lambda_k,\nabla c_k^\top\Delta P_k\rangle+\rho_k\langle\nabla c_kc_k,\Delta P_k\rangle,
    \end{align*}
    where the second equality is due to the fact that {\({\rm P}_{V_k}\nabla f_k\) is orthogonal to the column space of $\nabla c_k$}. With the notation $\Delta c_k := \tilde\nabla c_k \tilde c_k - \nabla c_k c_k$, it is easy to show that
    \begin{align*}
        &\langle\nabla\calL(x_k,\rho_k),w\tilde\nabla c_k\tilde c_k\rangle\\
        &=\langle{\nabla} f_k,w\tilde\nabla c_k\tilde c_k\rangle+\langle\nabla\lambda_kc_k,w\tilde\nabla c_k\tilde c_k\rangle+\langle\nabla c_k\lambda_k,w\tilde\nabla c_k\tilde c_k\rangle+\rho_k\langle\nabla c_kc_k,w\tilde\nabla c_k\tilde c_k\rangle\\
        &=\langle{\nabla} f_k-\nabla c_k(\nabla c_k^\top\nabla c_k)^{-1}\nabla c_k^\top\nabla f_k,w\tilde\nabla c_k\tilde c_k\rangle+\rho_k w\langle\nabla c_kc_k,\Delta c_k\rangle\\
        &\quad+w\langle\nabla\lambda_k\tilde\nabla c_k\tilde c_k,c_k\rangle+\rho_k w\|\nabla c_k c_k\|^2.
    \end{align*}
  Using the two preceding relations, we deduce that
    \be\begin{aligned}\label{L-times-s}
        &-\E[\langle\nabla\calL(x_k,\rho_k),\eta_k\tilde{s}_k\rangle]\\
        &=\E[\eta_k\langle{\nabla} f_k,{\rm P}_{\widetilde V_k}\tilde{\nabla} f_k\rangle+\eta_k\langle\nabla\lambda_kc_k,{\rm P}_{\widetilde V_k}\tilde{\nabla} f_k\rangle+\eta_k\langle\nabla c_k(\lambda_k+\rho_k c_k),\Delta P_k\rangle\\
        &\quad+\eta_k\rho_k w\|\nabla c_k c_k\|^2+ \eta_kw\langle\nabla\lambda_k\tilde\nabla c_k\tilde c_k,c_k\rangle+\eta_k\langle\nabla f_k+\nabla c_k\lambda_k+\rho_k\nabla c_k c_k,w\Delta c_k\rangle]\\
        &=\E[\eta_k\langle{\nabla} f_k,{\rm P}_{V_k}{\nabla} f_k\rangle+\eta_k\langle\nabla\lambda_k^\top{\rm P}_{\widetilde V_k}\tilde{\nabla} f_k+w\nabla\lambda_k\tilde\nabla c_k\tilde c_k,c_k\rangle+\eta_k\rho_k w\|\nabla c_k c_k\|^2\\
        &\quad+ \eta_k\langle\nabla f_k+\nabla c_k\lambda_k+\rho_k\nabla c_k c_k,\Delta P_k\rangle+\eta_k\langle\nabla f_k+\nabla c_k\lambda_k+\rho_k\nabla c_k c_k,w\Delta c_k\rangle]\\
        &\geq\E[\eta_k\|{\rm P}_{V_k}{\nabla} f_k\|^2]-L_\lambda\E[\eta_k(\|{\rm P}_{\widetilde V_k}\tilde{\nabla} f_k\|+w\|\tilde\nabla c_k\tilde c_k\|)\|c_k\|]+\E[\eta_k\rho_k w\|\nabla c_k c_k\|^2]\\
        &\quad-\E[\eta_k\|{\rm P}_{V_k}{\nabla} f_k\|(\|\Delta P_k\|+w\|\Delta c_k\|)]-L_c\E[\eta_k\rho_k\|c
        _k\|(\|\Delta P_k\|+w\|\Delta c_k\|)],
    \end{aligned}\ee
    where the first equality holds by the orthogonality of the two components of \(\tilde s_k\), and the inequality relies on the properties of the orthogonal projection $\mathrm{P}_{V_k}\nabla f_k$ combined with the bound $\|\nabla \lambda(x)\| \leq L_{\lambda}$ from Proposition \ref{le:lambda}.

    On the good event \(\mathcal{G}_k\), for the third item in R.H.S. of (\ref{L-times-s}), it is easy to attain from $\|a+b\|^2\geq\frac{1}{2}\|a\|^2-\|b\|^2$ and $\Delta J_k:=\tilde\nabla c_k-\nabla c_k$ that 
    \begin{align*}
        \E[\eta_k\rho_k w\|\nabla c_k c_k\|^2\mathbf{1}_{\mathcal{G}_k}]&=\E[\eta_k\rho_k w\|\nabla c_k c_k-\tilde\nabla c_k c_k+\tilde\nabla c_k c_k\|^2\mathbf{1}_{\mathcal{G}_k}]\\
        &\geq \frac{1}{2}\E[\eta_k\rho_k w\|\tilde\nabla c_k c_k\|^2\mathbf{1}_{\mathcal{G}_k}]- \E[\eta_k\rho_k w\| c_k \|^2\|\tilde\nabla c_k-\nabla c_k\|^2\mathbf{1}_{\mathcal{G}_k}]\\
        &\geq\frac{1}{2}\E[\eta_k\rho_k w\|\tilde\nabla c_k c_k\|^2\mathbf{1}_{\mathcal{G}_k}]-M\E[\eta_k\rho_k w\| c_k \|\|\Delta J_k\|^2\mathbf{1}_{\mathcal{G}_k}].
    \end{align*}
    Recalling the update rule for \(\rho_k\), we observe that on the good event \(\mathcal G_k\), the merit parameter satisfies $\rho_k\geq(4L_\lambda^2+2w^2L_c^2+2)/(w\mu_{\min}(\tilde\nabla c_k)^2)$, thereby implying that $\frac{1}{2}\rho_k w\|\tilde\nabla c_k c_k\|^2-2L_\lambda^2\|c_k\|^2\geq (w^2L_c^2+1)\|c_k\|^2$. It thus together with Young's inequality and the definition of $\tilde s_k$ indicates that
    \vspace{-3mm}
 \begin{align}
     &-\E[\langle\nabla\calL(x_k,\rho_k),\eta_k\tilde{s}_k\rangle \mathbf{1}_{\mathcal{G}_k}] \label{L-times-s-2} \\ 
     &\geq\E[\eta_k\|{\rm P}_{V_k}{\nabla} f_k\|^2\mathbf{1}_{\mathcal{G}_k}]+\frac{1}{2}\E[\eta_k\rho_k w\|\tilde\nabla c_k c_k\|^2\mathbf{1}_{\mathcal{G}_k}]-M\E[\eta_k\rho_k w\| c_k \|\|\Delta J_k\|^2\mathbf{1}_{\mathcal{G}_k}]\notag\\
        &\quad-L_\lambda\E\Big[\eta_k\Big(\frac{1}{4L_\lambda}\|{\rm P}_{\widetilde V_k}\tilde{\nabla} f_k\|^2+\frac{1}{4L_\lambda}w^2\|\tilde\nabla c_k\tilde c_k\|^2+2L_\lambda\|c_k\|^2\Big)\mathbf{1}_{\mathcal{G}_k}\Big]\notag\\
        &\quad-\E[\eta_k\big(\|{\rm P}_{V_k}{\nabla} f_k\|+L_c\rho_k\|c
        _k\|\big)(\|\Delta P_k\|+w\|\Delta c_k\|)\mathbf{1}_{\mathcal{G}_k}]\notag\\
        &=\E[\eta_k\|{\rm P}_{V_k}{\nabla} f_k\|^2\mathbf{1}_{\mathcal{G}_k}]+\E\big[\eta_k\big(\frac{1}{2}\rho_k w\|\tilde\nabla c_k c_k\|^2-2L_\lambda^2\|c_k\|^2\big)\mathbf{1}_{\mathcal{G}_k}\big]\notag\\
     &\quad-\frac{1}{4}\E[\eta_k\|\tilde s_k\|^2\mathbf{1}_{\mathcal{G}_k}]-M\E[\eta_k\rho_k w\| c_k \|\|\Delta J_k\|^2\mathbf{1}_{\mathcal{G}_k}]\notag\\
     &\quad-\E[\eta_k\big(\|{\rm P}_{V_k}{\nabla} f_k\|+L_c\rho_k\|c
        _k\|\big)(\|\Delta P_k\|+w\|\Delta c_k\|)\mathbf{1}_{\mathcal{G}_k}]\notag\\
     &\geq \E[\eta_k(\|{\rm P}_{V_k}{\nabla} f_k\|^2+(w^2L_c^2+1)\|c_k\|^2)\mathbf{1}_{\mathcal{G}_k}]-M\E[\eta_k\rho_k w\| c_k \|\|\Delta J_k\|^2\mathbf{1}_{\mathcal{G}_k}]\notag\\
     &\quad-\frac{1}{4}\E[\eta_k\|\tilde s_k\|^2\mathbf{1}_{\mathcal{G}_k}]-\E[\eta_k\big(\|{\rm P}_{V_k}{\nabla} f_k\|+L_c\rho_k\|c
        _k\|\big)(\|\Delta P_k\|+w\|\Delta c_k\|)\mathbf{1}_{\mathcal{G}_k}].\notag
 \end{align}
  On the other hand, on the complementary event \(\mathcal G_k^c\), the merit parameter satisfies $\rho_k\geq(2L_\lambda^2+1+w^2L_c^2)/(w\nu^2)$. Then it holds from (\ref{L-times-s}) that
\begin{align*}
 &-\E[\langle\nabla\calL(x_k,\rho_k),\eta_k\tilde{s}_k\rangle \mathbf{1}_{\mathcal{G}_k^c}]\\
     &\geq\E[\eta_k\|{\rm P}_{V_k}{\nabla} f_k\|^2\mathbf{1}_{\mathcal{G}_k^c}]+\E[\eta_k\rho_k w\|\nabla c_k c_k\|^2\mathbf{1}_{\mathcal{G}_k^c}]\notag\\
     &\quad-L_\lambda\E\Big[\eta_k\Big(\frac{1}{4L_\lambda}\|{\rm P}_{\widetilde V_k}\tilde{\nabla} f_k\|^2+\frac{1}{4L_\lambda}w^2\|\tilde\nabla c_k\tilde c_k\|^2+2L_\lambda\|c_k\|^2\Big)\mathbf{1}_{\mathcal{G}_k^c}\Big]\\
        &\quad-\E[\eta_k\big(\|{\rm P}_{V_k}{\nabla} f_k\|+L_c\rho_k\|c
        _k\|\big)(\|\Delta P_k\|+w\|\Delta c_k\|)\mathbf{1}_{\mathcal{G}_k^c}]\\        
        &\geq\E[\eta_k\|{\rm P}_{V_k}{\nabla} f_k\|^2\mathbf{1}_{\mathcal{G}_k^c}]+\E[\eta_k(\rho_k w\nu^2-2L_\lambda^2)\|c_k\|^2\mathbf{1}_{\mathcal{G}_k^c}]-\frac{1}{4}\E[\eta_k\|\tilde s_k\|^2\mathbf{1}_{\mathcal{G}_k^c}]\\
        &\quad-\E[\eta_k\big(\|{\rm P}_{V_k}{\nabla} f_k\|+L_c\rho_k\|c
        _k\|\big)(\|\Delta P_k\|+w\|\Delta c_k\|)\mathbf{1}_{\mathcal{G}_k^c}]\\     
     &\geq \E[\eta_k(\|{\rm P}_{V_k}{\nabla} f_k\|^2+(w^2L_c^2+1)\|c_k\|^2)\mathbf{1}_{\mathcal{G}_k^c}]-\frac{1}{4}\E[\eta_k\|\tilde s_k\|^2\mathbf{1}_{\mathcal{G}_k^c}]\\
        &\quad-\E[\eta_k\big(\|{\rm P}_{V_k}{\nabla} f_k\|+L_c\rho_k\|c
        _k\|\big)(\|\Delta P_k\|+w\|\Delta c_k\|)\mathbf{1}_{\mathcal{G}_k^c}],
 \end{align*}
 where the second inequality comes from Assumption \ref{a4}. Combining the bounds obtained on $\mathcal{G}_k$ and $\mathcal{G}_k^c$, we conclude that
\be\begin{aligned}\label{L-times-s-uni}
    & -\E[\langle\nabla\calL(x_k,\rho_k),\eta_k\tilde{s}_k\rangle]=-\E[\langle\nabla\calL(x_k,\rho_k),\eta_k\tilde{s}_k\rangle \mathbf{1}_{\mathcal G_k}]
     -\E[\langle\nabla\calL(x_k,\rho_k),\eta_k\tilde{s}_k\rangle \mathbf{1}_{\mathcal G_k^c}]\\
     &\geq \E[\eta_k(\|{\rm P}_{V_k}{\nabla} f_k\|^2+(w^2L_c^2+1)\|c_k\|^2)]-\frac{1}{4}\E[\eta_k\|\tilde s_k\|^2]-wM\E[\eta_k\rho_k \| c_k \|\|\Delta J_k\|^2]\\
     &\quad-\E\big[\big(\eta_k\|{\rm P}_{V_k}{\nabla} f_k\|+L_c\eta_k\rho_k\|c
        _k\|\big)(\|\Delta P_k\|+w\|\Delta c_k\|)\big].
\end{aligned}\ee
Substituting (\ref{L-times-s-uni}) into (\ref{L-expand}) yields the conclusion.
\end{proof}
The next lemma establishes an average bound on the stationarity and feasibility residuals along the sequence generated by Algorithm \ref{alg1}.
\begin{lemma}
     Under Assumptions {\ref{a1}-\ref{a4} and \ref{a5}}, it holds that
      \begin{align}\label{eq:s2}
       & \frac{1}{K} \sum_{k=1}^{K} \E\left[\|{\rm P}_{V_k} \nabla f_k\|^2 + \|c_k\|^2 \right]\\&\le \frac{4\E[\calL(x_1,\rho_1)-\calL(x_{K+1},\rho_{K+1})]}{\eta_{\min} K}+\frac{2M^2\rho_{\max}}{\eta_{\min}K}+\frac{\bar\eta C_s^2}{K}\sum_{k=1}^{K} (4\eta_{\max}L_{\max}+2)\tilde\sigma_k^2\notag\\
        &\quad + 4\bar\eta\tilde C\max\{1,\rho_{\max} L_c\}\frac{1}{K} \sum_{k=1}^{K} \tilde\sigma_k\left((\E[\|{\rm P}_{V_k} \nabla f_k\|^2])^{1/2}+(\E[\|c_k\|^2])^{1/2}\right),\notag
    \end{align}
    where $\tilde C:=(C_p^2+16G^2/\nu^2)^{1/2}+w(2M^2+2L_c^2)^{1/2}+2wM$ and $\bar\eta :=\eta_{\max}/\eta_{\min}$. 
\end{lemma}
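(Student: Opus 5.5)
The plan is to telescope the descent inequality \eqref{eq:descent-L} over $k=1,\dots,K$ and then bound each error term with Lemma~\ref{le:s-error} and Assumption~\ref{a5}.

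\textbf{Step 1: re-index the merit parameter.} \eqref{eq:descent-L} compares $\calL(x_{k+1},\rho_k)$ with $\calL(x_k,\rho_k)$. To telescope, we write
\[
\calL(x_{k+1},\rho_k)=\calL(x_{k+1},\rho_{k+1})-\tfrac{\rho_{k+1}-\rho_k}{2}\|c_{k+1}\|^2 .
\]
Since $\rho_k$ is nondecreasing and $\|c\|\le M$, the accumulated correction is at most $\tfrac{M^2}{2}\sum_k(\rho_{k+1}-\rho_k)\le \tfrac{M^2}{2}\rho_{\max}$. Using the leading coefficient $\eta_k\ge\eta_{\min}$ and $(w^2L_c^2+1)\ge1$, we divide by $\eta_{\min}K$. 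This produces the first two terms on the right-hand side; the factors $4$ and $2$ leave slack for absorbing parts of the $\|\tilde s_k\|^2$ term.

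\textbf{Step 2: the $\|\tilde s_k\|^2$ term.} Because $\eta_k L_k=1/4$, the term $\frac14(2\eta_k^2L_k+\eta_k)\|\tilde s_k\|^2$ has size roughly $\frac{3}{8}\eta_k\|\tilde s_k\|^2$. We split it as
\[
\|\tilde s_k\|^2\le 2\|s_k\|^2+2\|s_k-\tilde s_k\|^2 .
\]
By orthogonality, $\|s_k\|^2=\|{\rm P}_{V_k}\nabla f_k\|^2+w^2\|\nabla c_kc_k\|^2\le \|{\rm P}_{V_k}\nabla f_k\|^2+w^2L_c^2\|c_k\|^2$. This part is absorbed into the left-hand side, since its coefficient is at most $\frac34$ of the descent term. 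That is why the final bound carries the factor $4/\eta_{\min}$ and writes $\eta_{\max}$ with $L_{\max}$ in the variance term. The remaining piece $\|s_k-\tilde s_k\|^2$ is controlled by Lemma~\ref{le:s-error} via the tower property $\E[\cdot]=\E[\bar\E_k[\cdot]]$. This gives $C_s^2\tilde\sigma_k^2$ multiplied by $\eta_{\max}(4\eta_{\max}L_{\max}+2)$-type constants. Dividing by $\eta_{\min}$ yields the $\bar\eta$ factor.

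\textbf{Step 3: the cross terms.} For $\E[\eta_k\|{\rm P}_{V_k}\nabla f_k\|(\|\Delta P_k\|+w\|\Delta c_k\|)]$, we apply Cauchy--Schwarz in expectation: the bound is at most $\eta_{\max}(\E\|{\rm P}_{V_k}\nabla f_k\|^2)^{1/2}\big((\E\|\Delta P_k\|^2)^{1/2}+w(\E\|\Delta c_k\|^2)^{1/2}\big)$. Controlling the second factor requires splitting on $\mathcal G_k$ and $\mathcal G_k^c$ exactly as in Lemma~\ref{le:s-error}:
\begin{itemize}
\item for $\Delta P_k$, \eqref{pv-pv} and \eqref{Pv-bad} combined with Lemma~\ref{lem:bad-event} give $\E\|\Delta P_k\|^2\le (C_p^2+16G^2/\nu^2)\tilde\sigma_k^2$;
\item for $\Delta c_k$, the bound $2(M^2+L_c^2)\tilde\sigma_k^2$ holds without any event split.
\end{itemize}
The term with $L_c\rho_k\|c_k\|$ is treated the same way, using $\rho_k\le\rho_{\max}$. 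For the $wM\eta_k\rho_k\|c_k\|\|\Delta J_k\|^2$ term, we use $\|\Delta J_k\|\le 2L_c$ after truncation, or alternatively bound $\|\Delta J_k\|^2$ through Cauchy--Schwarz by $(\E\|c_k\|^2)^{1/2}(\E\|\Delta J_k\|^4)^{1/2}$. I expect this to be the main obstacle: Assumption~\ref{a5} gives only second moments. The plan is to use $\|\Delta J_k\|\le 2L_c$ to reduce $\|\Delta J_k\|^4\le 4L_c^2\|\Delta J_k\|^2$, giving a $\tilde\sigma_k$ rate up to constants. Since $\tilde\sigma_k<1$, we also have $\tilde\sigma_k^2\le\tilde\sigma_k$. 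Together these produce the $2wM$ contribution inside $\tilde C$, up to constant bookkeeping.

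\textbf{Step 4: collect terms.} Summing the pieces and dividing by $\eta_{\min}K$ gives the factor $4\bar\eta\tilde C\max\{1,\rho_{\max}L_c\}$ in front of the averaged cross terms, which is the stated bound.
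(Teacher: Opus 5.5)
Your proposal is correct and follows the paper's proof essentially step for step. The paper likewise bounds the cross terms by Cauchy--Schwarz, using $\E\|\Delta P_k\|^2\le (C_p^2+16G^2/\nu^2)\tilde\sigma_k^2$, $\E\|\Delta c_k\|^2\le 2(M^2+L_c^2)\tilde\sigma_k^2$ and $(\E\|\Delta J_k\|^4)^{1/2}\le 2L_c\tilde\sigma_k$; it splits $\|\tilde s_k\|^2\le 2\|s_k\|^2+2\|s_k-\tilde s_k\|^2$ to absorb $\|s_k\|^2$ via $1-2\eta_kL_k\ge\frac12$ (leaving coefficient $\eta_k/4$); and it telescopes through $\calL(x_{k+1},\rho_{k+1})-\calL(x_{k+1},\rho_k)=\frac{\rho_{k+1}-\rho_k}{2}\|c_{k+1}\|^2$, with your remark $\tilde\sigma_k^2\le\tilde\sigma_k$ being unnecessary on the fourth-moment route you adopt.
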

\begin{proof}
First, recall from \eqref{eq:descent-L} the error terms involving \(\Delta P_k\), \(\Delta c_k\), and \(\Delta J_k\). It follows from Cauchy--Schwarz inequality that
\be\begin{aligned}\label{eta_Pv}
&\mathbb E[\eta_k\|{\rm P}_{V_k}\nabla f_k\|(\|\Delta P_k\|+w\|\Delta c_k\|)] \\
&\le \big(\mathbb E[\eta_k^2\|{\rm P}_{V_k}\nabla f_k\|^2]\big)^{1/2} \big(\mathbb E[\|\Delta P_k\|^2]\big)^{1/2} + w\big(\mathbb E[\eta_k^2\|{\rm P}_{V_k}\nabla f_k\|^2]\big)^{1/2}\big(\mathbb E[\|\Delta c_k\|^2]\big)^{1/2} \\
&= \big(\mathbb E[\eta_k^2\|{\rm P}_{V_k}\nabla f_k\|^2]\big)^{1/2} \Big(\big(\mathbb E[\|\Delta P_k\|^2]\big)^{1/2}+w\big(\mathbb E[\|\Delta c_k\|^2]\big)^{1/2}\Big).
\end{aligned}\ee
The tower property, together with (\ref{pv-pv}) and the {estimate on $\mathcal{G}_k^c$ in} (\ref{Pv-bad}) yields \[\mathbb E[\|\Delta P_k\|^2]
=\mathbb E\!\left[\bar{\mathbb E}_k[\|\Delta P_k\|^2\mathbf 1_{\mathcal G_k}]+\bar{\mathbb E}_k[\|\Delta P_k\|^2\mathbf 1_{\mathcal G_k^c}]\right]\le \big(C_p^2+\frac{16G^2}{\nu^2}\big) \tilde\sigma_k^2:=\widehat{C}_p^2\tilde\sigma_k^2.\]
Moreover, it follows from Assumption \ref{a5} that, for the term $\|\Delta c_k\|$,
\begin{align*}
    \E[\|\Delta c_k\|^2]&=\E[\bar{\E}_k[\|\Delta c_k\|^2]]\leq2\E\left[\bar{\E}_k[\|\tilde\nabla c_k-\nabla c_k\|^2\|\tilde c_k\|^2]+\bar{\E}_k[\|\nabla c_k\|^2\|\tilde c_k-c_k\|^2]\right]\\
    &\leq 2(M^2+L_c^2)\tilde\sigma_k^2:=C_v^2\tilde\sigma_k^2.
\end{align*}
Plugging the above estimates into (\ref{eta_Pv}), we arrive at
\begin{align}\label{eq:eta-Pv}
\mathbb E[\eta_k\|{\rm P}_{V_k}\nabla f_k\|(\|\Delta P_k\|+w\|\Delta c_k\|)]
\le (\widehat{C}_p+wC_v)\tilde\sigma_k\big(\mathbb E[\eta_k^2\|{\rm P}_{V_k}\nabla f_k\|^2]\big)^{1/2}.
\end{align}
An analogous argument for the other term involving 
\((\|\Delta P_k\|+w\|\Delta c_k\|)\) gives
\be\begin{aligned}\label{eq:eta-c1}
&L_c\,\mathbb E[\eta_k\rho_k\|c_k\|(\|\Delta P_k\|+w\|\Delta c_k\|)]\\
&\le\rho_{\max}L_c\big(\mathbb E[\eta_k^2\|c_k\|^2]\big)^{1/2}
\Big((\mathbb E[\|\Delta P_k\|^2])^{1/2}
+w(\mathbb E[\|\Delta c_k\|^2])^{1/2}
\Big) \\
&\le \rho_{\max}L_c(\widehat{C}_p+wC_v)\tilde\sigma_k\big(\mathbb E[\eta_k^2\|c_k\|^2]\big)^{1/2}.
\end{aligned}\ee
In view of the bound $\|\Delta J_k\|^2 \leq 4L_c^2$ and  $\E[\|\Delta J_k\|^2]=\E[\bar{\E}_k\|\Delta J_k\|^2] \leq \tilde\sigma_k^2$ provided by Assumption \ref{a5}, the term involving $\Delta J_k$ in \eqref{eq:descent-L} satisfies
\be\begin{aligned}\label{eq:eta-c2}
    &wM\E[\eta_k\rho_k \| c_k \|\|\Delta J_k\|^2]\\
    &\leq wM\rho_{\max}\big(\mathbb E[\eta_k^2\|c_k\|^2]\big)^{1/2}\big(\mathbb E[\|\Delta J_k\|^4]\big)^{1/2}\leq2wM\rho_{\max}L_c\tilde\sigma_k\big(\mathbb E[\eta_k^2\|c_k\|^2]\big)^{1/2}.
\end{aligned}\ee
For notational convenience, we define $\tilde C := \widehat{C}_p + wC_v+2wM$. Incorporating (\ref{eq:eta-Pv})-(\ref{eq:eta-c2}) into (\ref{eq:descent-L}) results in
%\begin{align}\label{L-times-s-3}
%-\mathbb E[\langle \nabla \calL(x_k,\rho_k),\eta_k\tilde s_k\rangle]&\ge\mathbb E\big[\eta_k(\|{\rm P}_{V_k}\nabla f_k\|^2+(w^2L_c^2+1)\|c_k\|^2)-\frac{\eta_k}{4}\|\tilde s_k\|^2\big]\notag\\
%&\quad-\tilde C \tilde\sigma_k\big(\mathbb E[\eta_k^2\|{\rm P}_{V_k}\nabla f_k\|^2]\big)^{1/2}-\rho_{\max}L_c\tilde C \tilde\sigma_k\big(\mathbb E[\eta_k^2\|c_k\|^2]\big)^{1/2}.
%\end{align}
%Then substituting \eqref{L-times-s-3} into \eqref{L-expand} yields
    \begin{align*}
     &\E[\calL(x_{k+1},\rho_k)]\\
      & \le\E[\calL(x_k,\rho_k)] - \E\left[\eta_k\left( \|{\rm P}_{V_k} \nabla f_k\|^2 + (w^2L_c^2+1)\|c_k\|^2 \right)\right]+\tilde C \tilde\sigma_k\big(\mathbb E[\eta_k^2\|{\rm P}_{V_k}\nabla f_k\|^2]\big)^{\frac12} \\
        & \quad+\rho_{\max}L_c\tilde C \tilde\sigma_k\big(\mathbb E[\eta_k^2\|c_k\|^2]\big)^{\frac12}+ \frac{1}{4}\E[(2\eta_k^2L_k+\eta_k) \|\tilde{s}_k\|^2]\\
        & \le\E[\calL(x_k,\rho_k)] - \E[\eta_k\left( \|{\rm P}_{V_k} \nabla f_k\|^2 + (w^2L_c^2+1)\|c_k\|^2 \right)]+\tilde C \tilde\sigma_k\big(\mathbb E[\eta_k^2\|{\rm P}_{V_k}\nabla f_k\|^2]\big)^{\frac12} \\
        & \quad+\rho_{\max}L_c\tilde C \tilde\sigma_k\big(\mathbb E[\eta_k^2\|c_k\|^2]\big)^{\frac12}+ \E\left[\left(\eta_k^2L_k+\frac{\eta_k}{2}\right)(\|{s}_k\|^2+\|\tilde{s}_k-{s}_k\|^2)\right]\\
        & \le\E[\calL(x_k,\rho_k)] - \E[\eta_k\left( \|{\rm P}_{V_k} \nabla f_k\|^2 + (w^2L_c^2+1)\|c_k\|^2 \right)]+\tilde C \tilde\sigma_k\big(\mathbb E[\eta_k^2\|{\rm P}_{V_k}\nabla f_k\|^2]\big)^{\frac12}\\
        &\quad+\rho_{\max}L_c\tilde C \tilde\sigma_k\big(\mathbb E[\eta_k^2\|c_k\|^2]\big)^{\frac12}+\E\left[\left(\eta_k^2L_k+\frac{\eta_k}{2}\right)\|\tilde{s}_k-{s}_k\|^2\right] \\
        & \quad+ \E\left[(\eta_k^2L_k+\frac{\eta_k}{2})\left( \|{\rm P}_{V_k} \nabla f_k\|^2 + w^2 L_c^2 \|c_k\|^2 \right)\right]\\
        & =\E[\calL(x_k,\rho_k)] - \E\left[\frac{\eta_k}{2} (1 - 2\eta_k L_k ) \|{\rm P}_{V_k} \nabla f_k\|^2+\left(\frac{\eta_k}{2} w^2 L_c^2 (1 -2\eta_k L_k)+\eta_k\right) \|c_k\|^2\right]\\
        & \quad+\tilde C \tilde\sigma_k\left[\big(\mathbb E[\eta_k^2\|{\rm P}_{V_k}\nabla f_k\|^2]\big)^{\frac12}+\rho_{\max}L_c\big(\mathbb E[\eta_k^2\|c_k\|^2]\big)^{\frac12} \right]+(\eta_{\max}^2L_{\max}+\frac{\eta_{\max}}{2})C_s^2 \tilde\sigma_k^2\\
       & \le \E[\calL(x_k,\rho_k)] - \E\left[\frac{\eta_k}{4} \left( \|{\rm P}_{V_k} \nabla f_k\|^2 + \|c_k\|^2 \right)\right]+(\eta_{\max}^2L_{\max}+\frac{\eta_{\max}}{2}) C_s^2\tilde\sigma_k^2\\
       &\quad+\tilde C \tilde\sigma_k\big(\mathbb E[\eta_k^2\|{\rm P}_{V_k}\nabla f_k\|^2]\big)^{\frac12}+\rho_{\max}L_c\tilde C \tilde\sigma_k\big(\mathbb E[\eta_k^2\|c_k\|^2]\big)^{\frac12},
    \end{align*}
    where the last inequality follows from $1-2\eta_kL_k\geq\frac{1}{2}$ given by (\ref{eq:eta-update}). Note that
    \begin{align}\label{Lk+1-Lk}
        \calL(x_{k+1},\rho_{k+1})-\calL(x_{k},\rho_{k})
        &=\calL(x_{k+1},\rho_{k+1})-\calL(x_{k+1},\rho_{k})+\calL(x_{k+1},\rho_{k})-\calL(x_{k},\rho_{k})\notag\\
        &=\frac{\rho_{k+1}-\rho_k}{2}\|c(x_{k+1})\|^2+\calL(x_{k+1},\rho_{k})-\calL(x_{k},\rho_{k}).
    \end{align}
    Summing the above inequality over \(k = 1,\ldots, K\) and rearranging the terms leads to \eqref{eq:s2}. 
    % \begin{align*}
    %     &\frac{1}{K} \sum_{k=1}^{K} \E[\|{\rm P}_{V_k} \nabla f_k\|^2 + \|c_k\|^2]  \\
    %     & \le \frac{4\E[\calL(x_1,\rho_1)-\calL(x_{K+1},\rho_{K+1})]}{\eta_{\min} K}+\frac{2M^2\rho_{\max}}{\eta_{\min}K}+\frac{\bar\eta C_s^2}{K}\sum_{k=1}^{K}\E[(4\eta_kL_k+2)\tilde\sigma_k^2]\\
    %     &\quad + 4\bar\eta\tilde C\max\{1,\rho_{\max} L_c\}\frac{1}{K} \sum_{k=1}^{K} \tilde\sigma_k\left(\E[\|{\rm P}_{V_k}\nabla f_k\|^2]\big)^{1/2}+\E[\|c_k\|^2]\big)^{1/2}\right),
    % \end{align*}
    % where $\bar\eta$ is defined as $\eta_{\max}/\eta_{\min}$. 
    The proof is thus completed.
\end{proof}
%\begin{remark}
%Note that, by the monotone update rule for \(\rho_k\), once the bad event occurs at some iteration, \(\rho_k\) is raised to the conservative level determined by the safeguard \(\nu^2/4\) and will not be updated afterward. Consequently, even if subsequent iterations fall into the good event again, \(\rho_k\) remains at least as large as the conservative value selected when the bad event occurs. In this case, one may continue the analysis by using the same conservative lower bound as in the bad-event case. Therefore, the unified estimate used later remains valid. Moreover, we will subsequently show that the probability of the bad event is sufficiently small.
%\end{remark}

We next derive an upper bound on $\sum_{k=1}^{K}\E[\|\tilde{s}_k\|^2]$, which will be used later to support the oracle complexity analysis.
\begin{lemma}\label{le:sum-sk}
Under Assumptions {\ref{a1}-\ref{a4} and \ref{a5}}, it holds that 
    \begin{align}\label{eq:sum-sk}
        \sum_{k=1}^{K}\E[\|\tilde{s}_k\|^2]&\leq \frac{8\E[\calL(x_1,\rho_1)-\calL(x_{K+1},\rho_{K+1})]}{\eta_{\min} }+8\bar\eta C_s^2 \sum_{k=1}^{K} \tilde\sigma_k^2+\frac{4M^2\rho_{\max}}{\eta_{\min}}\notag\\
        &\quad+ 8\bar\eta\tilde C\max\{1,\rho_{\max}L_c\}  \sum_{k=1}^{K} \tilde\sigma_k\left((\E[\|{\rm P}_{V_k} \nabla f_k\|^2])^{1/2}+(\E[\|c_k\|^2])^{1/2}\right).
    \end{align}
\end{lemma}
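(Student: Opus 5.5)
We would obtain \eqref{eq:sum-sk} by revisiting the one-step estimate in the proof of the preceding lemma, this time keeping the term in $\|\tilde s_k\|^2$ rather than discarding it. Instead of bounding $\frac14\E[(2\eta_k^2L_k+\eta_k)\|\tilde s_k\|^2]$ from above by $\|s_k\|^2+\|\tilde s_k-s_k\|^2$, we split the negative descent term so that a fraction of $\eta_k\|\tilde s_k\|^2$ survives on the left.

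The first step is to lower bound the main negative term by a multiple of $\|\tilde s_k\|^2$ plus an error. Using $\|\tilde s_k\|^2\le 2\|s_k\|^2+2\|\tilde s_k-s_k\|^2$ and
\[\|s_k\|^2=\|{\rm P}_{V_k}\nabla f_k\|^2+w^2\|\nabla c_kc_k\|^2\le \|{\rm P}_{V_k}\nabla f_k\|^2+w^2L_c^2\|c_k\|^2\]
(the two components are orthogonal), we get
\[\|{\rm P}_{V_k}\nabla f_k\|^2+(w^2L_c^2+1)\|c_k\|^2\ge \tfrac12\|\tilde s_k\|^2-\|\tilde s_k-s_k\|^2.\]
We would split the descent term in \eqref{eq:descent-L} into two halves. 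One half is kept exactly as in the previous proof, where it absorbs the $(\eta_k^2L_k+\eta_k/2)\|s_k\|^2$ contribution. The other half is converted into $\frac{\eta_k}{4}\|\tilde s_k\|^2-\frac{\eta_k}{2}\|\tilde s_k-s_k\|^2$.

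Alternatively, and more simply, we would reuse the final one-step inequality of the previous proof, which reads
\[\E[\calL(x_{k+1},\rho_k)]\le \E[\calL(x_k,\rho_k)]-\tfrac14\E[\eta_k(\|{\rm P}_{V_k}\nabla f_k\|^2+\|c_k\|^2)]+\cdots.\]
Since $w<1$, we have $\|s_k\|^2\le \max\{1,w^2L_c^2\}(\|{\rm P}_{V_k}\nabla f_k\|^2+\|c_k\|^2)$. Using this, the inequality on $\|\tilde s_k\|^2$ above, and Lemma~\ref{le:s-error}, we would bound $\E[\eta_k\|\tilde s_k\|^2]\le 2\E[\eta_k\|s_k\|^2]+2\eta_{\max}C_s^2\tilde\sigma_k^2$. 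The descent term would then control $\E[\eta_k\|\tilde s_k\|^2]/8$, up to constants.

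The constant $8$ in \eqref{eq:sum-sk} is exactly twice the $4$ in \eqref{eq:s2}. This suggests the intended route is the direct one: bound $\E\|\tilde s_k\|^2\le 2\E\|s_k\|^2+2C_s^2\tilde\sigma_k^2$, bound $\|s_k\|^2$ by $\|{\rm P}_{V_k}\nabla f_k\|^2+\|c_k\|^2$ up to the factor $\max\{1,w^2L_c^2\}$, sum over $k$, and plug in \eqref{eq:s2} multiplied by $K$. This yields $2\times$ every term of \eqref{eq:s2}$\cdot K$, plus $2C_s^2\sum_k\tilde\sigma_k^2$. The $2M^2\rho_{\max}/\eta_{\min}$ term doubles to $4M^2\rho_{\max}/\eta_{\min}$, matching the statement. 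The $\tilde\sigma_k^2$ terms combine as
\[2\bar\eta C_s^2(4\eta_{\max}L_{\max}+2)+2C_s^2\le 8\bar\eta C_s^2,\]
using $\eta_{\max}L_{\max}\le \eta_{\max}L_{\max}$ together with $4\eta_{\max}L_{\min}=1$. That combination needs checking, since $\eta_{\max}L_{\max}=L_{\max}/(4L_{\min})$ can exceed $1/4$.

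\textbf{Main obstacle.} The main obstacle is precisely this constant bookkeeping. The factor $w^2L_c^2$ in $\|s_k\|^2$ must be absorbed, which is presumably why the merit inequality carries $(w^2L_c^2+1)\|c_k\|^2$. The $\tilde\sigma_k^2$ coefficients must also collapse to $8\bar\eta C_s^2$. If the direct route loses constants, we would instead redo the one-step inequality, retaining $\frac{\eta_k}{2}(1-2\eta_kL_k)\|s_k\|^2$-type terms against the $(w^2L_c^2+1)$ weight so that $\|s_k\|^2$ appears directly without the extra factor. We would then telescope using \eqref{Lk+1-Lk} and $\rho_{k+1}-\rho_k$ summing to at most $\rho_{\max}$, exactly as before.
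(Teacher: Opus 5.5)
\textbf{The route you commit to loses constants.} Passing through \eqref{eq:s2} after $\|\tilde s_k\|^2\le 2\|s_k\|^2+2\|\tilde s_k-s_k\|^2$ cannot give \eqref{eq:sum-sk} with its stated constants, and the constant combination you flag is false. Since $\eta_{\max}=1/(4L_{\min})$ and $\eta_{\min}=1/(4L_{\max})$, you have $4\eta_{\max}L_{\max}=L_{\max}/L_{\min}=\bar\eta$. So even in the favourable case $wL_c\le 1$, your $\tilde\sigma_k^2$-coefficient is
\[
2\bar\eta C_s^2(\bar\eta+2)+2C_s^2=2C_s^2(\bar\eta+1)^2=8\bar\eta C_s^2+2C_s^2(\bar\eta-1)^2 .
\]
This is strictly larger than $8\bar\eta C_s^2$ whenever $\rho_{\max}>\rho_0$. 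There is a second loss: \eqref{eq:s2} only controls $\|{\rm P}_{V_k}\nabla f_k\|^2+\|c_k\|^2$ with unit weights, so every term inherits the factor $\max\{1,w^2L_c^2\}$, and nothing in the assumptions forces $wL_c\le 1$. You would get an inequality of the same order, which would still suffice for the complexity theorem, but not the lemma as stated.

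\textbf{The two-halves idea also fails.} With $\eta_kL_k=\frac14$, expanding the smoothness/Young term produces $(\eta_k^2L_k+\frac{\eta_k}{2})\|s_k\|^2=\frac{3\eta_k}{4}\|s_k\|^2$. Half of the descent term cannot absorb this: when $c_k=0$ that half equals only $\frac{\eta_k}{2}\|s_k\|^2$.

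\textbf{The missing step.} Leave $\frac14(2\eta_k^2L_k+\eta_k)\|\tilde s_k\|^2$ unexpanded and convert the \emph{entire} descent term. In \eqref{eq:descent-L} the weight $(w^2L_c^2+1)$ already gives, via the inequality you wrote,
\[
\|{\rm P}_{V_k}\nabla f_k\|^2+(w^2L_c^2+1)\|c_k\|^2\ge\|s_k\|^2\ge\tfrac12\|\tilde s_k\|^2-\|s_k-\tilde s_k\|^2,
\]
with no $\max\{1,w^2L_c^2\}$ loss. Separately, $\eta_kL_k\le\frac14$ gives $\frac14(2\eta_k^2L_k+\eta_k)\le\frac{3\eta_k}{8}$ at each iteration. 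Hence
\begin{align*}
\E[\calL(x_{k+1},\rho_k)]&\le\E[\calL(x_k,\rho_k)]-\tfrac18\E[\eta_k\|\tilde s_k\|^2]+\E[\eta_k\|s_k-\tilde s_k\|^2]\\
&\quad+\eta_{\max}\tilde C\max\{1,\rho_{\max}L_c\}\tilde\sigma_k\Big((\E[\|{\rm P}_{V_k}\nabla f_k\|^2])^{1/2}+(\E[\|c_k\|^2])^{1/2}\Big).
\end{align*}
Here the cross terms are bounded as in \eqref{eq:eta-Pv}--\eqref{eq:eta-c2}, and $\E[\eta_k\|s_k-\tilde s_k\|^2]\le\eta_{\max}C_s^2\tilde\sigma_k^2$ by Lemma~\ref{le:s-error}. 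Telescoping with \eqref{Lk+1-Lk} and multiplying by $8/\eta_{\min}$ gives exactly \eqref{eq:sum-sk}.

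This is the paper's proof. The paper re-derives the good/bad-event lower bound with weight $w^2L_c^2$ instead of quoting \eqref{eq:descent-L}, to the same effect. Your fallback of redoing the one-step inequality can also land here, but only if you use $\eta_kL_k\le\frac14$ iteration by iteration. Replacing $\eta_k^2L_k$ by $\eta_{\max}^2L_{\max}$, as in the proof of \eqref{eq:s2}, inflates the $C_s^2$ constant to $2\bar\eta(\bar\eta+3)C_s^2$.
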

\begin{proof}
    It follows from (\ref{L-times-s-2}) that, on the good event $\mathcal{G}_k$,
    \begin{align*}
     &-\E[\langle\nabla\calL(x_k,\rho_k),\eta_k\tilde{s}_k\rangle\mathbf{1}_{\mathcal{G}_k}]\\
     &\geq \E\Big[\Big(\eta_k\|{\rm P}_{V_k}{\nabla} f_k\|^2+(\frac{1}{2}\rho_k w\|\tilde\nabla c_k c_k\|^2-2L_\lambda^2\|c_k\|^2)\eta_k-\frac{\eta_k}{4}\|\tilde s_k\|^2\Big)\mathbf{1}_{\mathcal{G}_k}\Big]\\
        &\quad-M\E[\eta_k\rho_k w\| c_k \|\|\Delta J_k\|^2\mathbf{1}_{\mathcal{G}_k}]-\E[\eta_k\|{\rm P}_{V_k}{\nabla} f_k\|(\|\Delta P_k\|+w\|\Delta c_k\|)\mathbf{1}_{\mathcal{G}_k}]\\
        &\quad-L_c\E[\eta_k\rho_k\|c
        _k\|(\|\Delta P_k\|+w\|\Delta c_k\|)\mathbf{1}_{\mathcal{G}_k}]\\
     &\geq \E\big[\big(\eta_k(\|{\rm P}_{V_k}{\nabla} f_k\|^2+w^2L_c^2\|c_k\|^2)-\frac{\eta_k}{4}\|\tilde s_k\|^2\big)\mathbf{1}_{\mathcal{G}_k}\big]-M\E[\eta_k\rho_k w\| c_k \|\|\Delta J_k\|^2\mathbf{1}_{\mathcal{G}_k}]\\
        &\quad-\E\big[\big(\eta_k\|{\rm P}_{V_k}{\nabla} f_k\|+L_c\eta_k\rho_k\|c
        _k\|\big)(\|\Delta P_k\|+w\|\Delta c_k\|)\mathbf{1}_{\mathcal{G}_k}\big]\\
        &\geq\E[\eta_k\|s_k\|^2\mathbf{1}_{\mathcal{G}_k}]-\frac{1}{4}\E[\eta_k\|\tilde s_k\|^2\mathbf{1}_{\mathcal{G}_k}]-M\E[\eta_k\rho_k w\| c_k \|\|\Delta J_k\|^2\mathbf{1}_{\mathcal{G}_k}]\\
        &\quad-\E\big[\big(\eta_k\|{\rm P}_{V_k}{\nabla} f_k\|+L_c\eta_k\rho_k\|c
        _k\|\big)(\|\Delta P_k\|+w\|\Delta c_k\|)\mathbf{1}_{\mathcal{G}_k}\big]\\
        &\geq \frac{1}{4}\E[\eta_k\|\tilde s_k\|^2\mathbf{1}_{\mathcal{G}_k}]-\E[\eta_k\|s_k-\tilde s_k\|^2\mathbf{1}_{\mathcal{G}_k}]-M\E[\eta_k\rho_k w\| c_k \|\|\Delta J_k\|^2\mathbf{1}_{\mathcal{G}_k}]\\
        &\quad-\E\big[\big(\eta_k\|{\rm P}_{V_k}{\nabla} f_k\|+L_c\eta_k\rho_k\|c
        _k\|\big)(\|\Delta P_k\|+w\|\Delta c_k\|)\mathbf{1}_{\mathcal{G}_k}\big],
    \end{align*}
     where the second inequality holds due to the condition $\frac{1}{2}\rho_k w\|\tilde\nabla c_k c_k\|^2-2L_\lambda^2\|c_k\|^2\geq w^2L_c^2\|c_k\|^2$ guaranteed by the update rule of $\rho_k$, and the last inequality follows from $\|a+b\|^2\geq\frac{1}{2}\|a\|^2-\|b\|^2$. Similarly, on the bad event $\mathcal{G}_k^c$, it holds from $\rho_k\geq(2L_\lambda^2+w^2L_c^2)/(w\nu^2)$ that
     \begin{align*}
         &-\E[\langle\nabla\calL(x_k,\rho_k),\eta_k\tilde{s}_k\rangle\mathbf{1}_{\mathcal{G}_k^c}]\\
     &\geq \E\big[\big(\eta_k\|{\rm P}_{V_k}{\nabla} f_k\|^2+(\rho_k w\nu^2-2L_\lambda^2)\eta_k\|c_k\|^2-\frac{\eta_k}{4}\|\tilde s_k\|^2\big)\mathbf{1}_{\mathcal{G}_k^c}\big]\\
        &\quad-\E\big[\big(\eta_k\|{\rm P}_{V_k}{\nabla} f_k\|+L_c\eta_k\rho_k\|c
        _k\|\big)(\|\Delta P_k\|+w\|\Delta c_k\|)\mathbf{1}_{\mathcal{G}_k^c}\big]\\
     &\geq \E[\eta_k\|s_k\|^2\mathbf{1}_{\mathcal{G}_k^c}]-\frac{1}{4}\E[\eta_k\|\tilde s_k\|^2\mathbf{1}_{\mathcal{G}_k^c}]-\E[\eta_k\|{\rm P}_{V_k}{\nabla} f_k\|(\|\Delta P_k\|+w\|\Delta c_k\|)\mathbf{1}_{\mathcal{G}_k^c}]\\
        &\quad-L_c\E[\eta_k\rho_k\|c
        _k\|(\|\Delta P_k\|+w\|\Delta c_k\|)\mathbf{1}_{\mathcal{G}_k^c}]\\
        &\geq \frac{1}{4}\E[\eta_k\|\tilde s_k\|^2\mathbf{1}_{\mathcal{G}_k^c}]-\E[\eta_k\|s_k-\tilde s_k\|^2\mathbf{1}_{\mathcal{G}_k^c}]-\E[\eta_k\|{\rm P}_{V_k}{\nabla} f_k\|(\|\Delta P_k\|+w\|\Delta c_k\|)\mathbf{1}_{\mathcal{G}_k^c}]\\
        &\quad-L_c\E[\eta_k\rho_k\|c
        _k\|(\|\Delta P_k\|+w\|\Delta c_k\|)\mathbf{1}_{\mathcal{G}_k^c}].
     \end{align*}
    Combining the bounds obtained on $\mathcal{G}_k$ and $\mathcal{G}_k^c$, respectively, we obtain
    \begin{align*}
          \E[\calL(x_{k+1},\rho_k)]
          &\leq\E[\calL(x_k,\rho_k)]+\E[\langle\nabla\calL(x_k,\rho_k),\eta_k\tilde{s}_k\rangle]+\frac{1}{2}\E[\eta_k^2L_k\|\tilde{s}_k\|^2]\\
          &\leq \E[\calL(x_k,\rho_k)]-\frac{1}{4}\E[(\eta_k-2\eta_k^2 L_k)\|\tilde{s}_k\|^2]+\E[\eta_k\|s_k-\tilde s_k\|^2]\\
          &\quad+\tilde C \tilde\sigma_k\big(\mathbb E[\eta_k^2\|{\rm P}_{V_k}\nabla f_k\|^2]\big)^{1/2}+\rho_{\max}L_c\tilde C \tilde\sigma_k\big(\mathbb E[\eta_k^2\|c_k\|^2]\big)^{1/2}\\
          &\leq\E[\calL(x_k,\rho_k)]-\frac{1}{8}\E[\eta_k\|\tilde{s}_k\|^2]+\E[\eta_k\|s_k-\tilde s_k\|^2]\\
          &\quad+\tilde C\max\{1,\rho_{\max} L_c\}\tilde\sigma_k\left(\E[\eta_k^2\|{\rm P}_{V_k}\nabla f_k\|^2]\big)^{1/2}+\E[\eta_k^2\|c_k\|^2]\big)^{1/2}\right),
    \end{align*}
    where the second inequality follows from (\ref{eq:eta-Pv}) and (\ref{eq:eta-c1}), and the last is due to $\eta_k\leq\frac{1}{4L_k}$. Summing it from $k=1$ to $K$ and using (\ref{Lk+1-Lk}), we derive
    \begin{align*}
        \sum_{k=1}^{K}\E[\|\tilde{s}_k\|^2]&\leq \frac{8\E[\calL(x_1,\rho_1)-\calL(x_{K+1},\rho_{K+1})]}{\eta_{\min} }+8\bar\eta C_s^2 \sum_{k=1}^{K} \tilde\sigma_k^2+\frac{4M^2\rho_{\max}}{\eta_{\min}}\\
        &\quad+ 8\bar\eta\tilde C\max\{1,\rho_{\max} L_c\}\sum_{k=1}^{K}\tilde\sigma_k\left(\E[\|{\rm P}_{V_k}\nabla f_k\|^2]\big)^{1/2}+\E[\|c_k\|^2]\big)^{1/2}\right).
    \end{align*}
    Hence, we obtain the conclusion.
\end{proof}

\section{Complexity analysis}
In this section, we establish the oracle complexity of Algorithm \ref{alg1} for finding a stochastic \(\epsilon\)-KKT point of problem (\ref{p}). The analysis starts with expectation bounds on the estimation errors generated by the truncated SPIDER-type estimators.

\begin{lemma}\label{lem:spider-est}
Suppose that Assumptions \ref{a1}-\ref{a2} holds. Then for any $k\ge 1$  
%with $q(k)=\lfloor (k-1)/\tau\rfloor\tau+1$ denoting the starting point of the epoch containing \(k\), 
it holds that
\begin{align}
\bar{\mathbb E}_k\!\left[\|\tilde \nabla f_k-\nabla f_k\|^2\right]
&\le
\frac{\sigma_f^2}{B_{q(k)}^f}
+
\sum_{p=q(k)+1}^{k}\bar{\mathbb E}_k\!\bigg[\frac{L_f^2}{B_p^f}\|x_p-x_{p-1}\|^2\bigg], \label{eq:spider-f}\\
\bar{\mathbb E}_k\!\left[\|\tilde \nabla c_k-\nabla c_k\|^2\right]
&\le
\frac{m\sigma_J^2}{B_{q(k)}^J}
+
\sum_{p=q(k)+1}^{k}\bar{\mathbb E}_k\!\left[
\frac{mL_J^2}{B_p^J}
\|x_p-x_{p-1}\|^2\right], \label{eq:spider-J}\\
\bar{\mathbb E}_k\!\left[\|\tilde c_k-c_k\|^2\right]
&\le
\frac{\sigma_c^2}{B_{q(k)}^c}
+
\sum_{p=q(k)+1}^{k}\bar{\mathbb E}_k\!\left[
\frac{L_c^2}{B_p^c}
\|x_p-x_{p-1}\|^2\right]. \label{eq:spider-c}
\end{align}
\end{lemma}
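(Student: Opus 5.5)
The plan is to prove the three bounds by one recursive argument on the estimation error. It rests on two ingredients: the nonexpansiveness of the truncation map and the martingale-difference structure of the SPIDER increments. I write it for $\tilde\nabla f_k$; the other two estimators follow the same pattern.

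Let $e_k:=\tilde\nabla f_k-\nabla f_k$. By Assumption \ref{a1}, $\nabla f_k\in\mathbb B(G)$, and ${\rm P}_{\mathbb B(G)}$ is the projection onto a closed convex set containing $\nabla f_k$. Hence $\|{\rm P}_{\mathbb B(G)}(v)-\nabla f_k\|\le\|v-\nabla f_k\|$ for every $v$, so the truncation can be dropped at the cost of an inequality.

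\textbf{Epoch start, $k=q(k)$.} Here $\bar{\mathcal F}_k=\mathcal F_k$ and $x_k$ is $\mathcal F_k$-measurable. The samples in $\mathcal B_k^f$ are i.i.d. and independent of $\mathcal F_k$, so the minibatch variance identity and Assumption \ref{a2} give $\bar{\mathbb E}_k\|e_k\|^2\le\sigma_f^2/B_k^f$.

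\textbf{Inside an epoch, $k>q(k)$.} By nonexpansiveness,
\[
\|e_k\|^2\le\Big\|e_{k-1}+\frac{1}{B_k^f}\sum_{\xi\in\mathcal B_k^f}\big(\delta_\xi-(\nabla f_k-\nabla f_{k-1})\big)\Big\|^2,
\qquad \delta_\xi:=\nabla F(x_k,\xi)-\nabla F(x_{k-1},\xi).
\]
Both $e_{k-1}$ and $x_k,x_{k-1}$ are $\mathcal F_k$-measurable, and the summands have zero conditional mean given $\mathcal F_k$. Taking $\mathbb E_k$ therefore kills the cross term, and the independence of the samples gives
\[
\mathbb E_k\|e_k\|^2\le\|e_{k-1}\|^2+\frac{1}{B_k^f}\,\mathbb E_\xi\|\delta_\xi\|^2\le\|e_{k-1}\|^2+\frac{L_f^2}{B_k^f}\|x_k-x_{k-1}\|^2,
\]
where the variance is bounded by the second moment and the mean-squared Lipschitz condition of Assumption \ref{a2} is used. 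Since $\bar{\mathcal F}_k=\bar{\mathcal F}_{k-1}\subseteq\mathcal F_k$ within an epoch, the tower property lets us take $\bar{\mathbb E}_k$. Unrolling from $p=k$ down to $q(k)+1$ and inserting the epoch-start bound yields \eqref{eq:spider-f}. The batch sizes may be random but are decided before sampling, so they stay inside the conditional expectation as written. The constraint-value estimate \eqref{eq:spider-c} follows identically, with $c_k\in\mathbb B(M)$, $\sigma_c$ and $L_c$.

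\textbf{Jacobian estimate.} This is the step I expect to need the most care, for two reasons. First, ${\rm P}_{\mathbb B(L_c)}$ is taken in the Frobenius norm, so I will carry out the whole recursion in $\|\cdot\|_F$, where the inner product is the trace inner product and the cross-term argument carries over verbatim. Second, Assumption \ref{a2} bounds variances and Lipschitz moduli only in the spectral norm, which is where the factor $m$ enters. Each column $A e_i$ satisfies $\|Ae_i\|\le\|A\|$, so $\|A\|_F^2\le m\|A\|^2$ for any $n\times m$ matrix $A$. This gives $\mathbb E_\xi\|\nabla C(x,\xi)-\nabla c(x)\|_F^2\le m\sigma_J^2$ and $\mathbb E_\xi\|\nabla C(x_k,\xi)-\nabla C(x_{k-1},\xi)\|_F^2\le mL_J^2\|x_k-x_{k-1}\|^2$. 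The recursion then yields the Frobenius version of \eqref{eq:spider-J}, and $\|\cdot\|\le\|\cdot\|_F$ transfers it to the spectral norm.

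The one gap is nonexpansiveness at the true point. It requires $\nabla c_k\in\mathbb B(L_c)$ in the Frobenius sense, while Assumption \ref{a2} only gives $\|\nabla c_k\|\le L_c$ in the spectral norm. I would first read the bound $\|\nabla c(x)\|\le L_c$ as holding in the Frobenius norm, i.e.\ take $L_c$ large enough, which is the convention implicit in choosing that truncation radius. If that reading is not permitted, the fallback is to truncate at radius $\sqrt m L_c$; this changes only constants elsewhere in the paper and leaves this recursion untouched.
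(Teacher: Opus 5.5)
Your proposal is correct and follows the same route as the paper: nonexpansiveness of the truncation at the true quantity, the vanishing cross term under $\mathbb E_p$, the mean-squared Lipschitz bound on the increments, and unrolling through the tower property using $\bar{\mathcal F}_k\subseteq\mathcal F_p$. Your extra care in the Jacobian case, which the paper covers only by ``similarly'', is well placed: the factor $m$ does come from $\|A\|_F^2\le m\|A\|^2$, and the point you flag (nonexpansiveness needs $\|\nabla c_k\|_F\le L_c$, not just the spectral bound) is a real omission in the paper, harmlessly fixed by enlarging $L_c$, since Assumption \ref{a2} only asserts the existence of such a constant.
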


\begin{proof}
For any fixed $k\ge 1$, we first consider the case $k=q(k)$. Since $\nabla f_k\in \mathbb B(G)$ by Assumption~\ref{a1}, the nonexpansiveness of the projection operator implies
\begin{align*}
\|\tilde \nabla f_k-\nabla f_k\|^2
&=\Big\|{\rm P}_{\mathbb B(G)}
\big(\frac{1}{B_k^f}\sum_{\xi\in \mathcal{B}_k^f}\nabla F(x_k,\xi)\big)
-{\rm P}_{\mathbb B(G)}(\nabla f_k)\Big\|^2 \\
&\le
\Big\|
\frac{1}{B_k^f}\sum_{\xi\in \mathcal{B}_k^f}
\big(\nabla F(x_k,\xi)-\nabla f_k\big)
\Big\|^2.
\end{align*}
Taking conditional expectation with respect to $\bar{\mathcal F}_k=\mathcal F_k$ and using Assumption~\ref{a2}, we obtain
\begin{equation}
\bar{\mathbb E}_k[\|\tilde \nabla f_k-\nabla f_k\|^2]
\le\frac{\sigma_f^2}{B_k^f}.
\label{eq:spider-refresh-f-bound}
\end{equation}
Next, let $p\in\{{q(k)}+1,\ldots,k\}$. It follows from \eqref{eq:spider-k} and the nonexpansiveness of the projection operator that
\begin{align*}
&\|\tilde \nabla f_p-\nabla f_p\|^2\leq\Big\|\tilde \nabla f_{p-1}+\frac{1}{B_p^f}\sum_{\xi\in \mathcal{B}_p^f}\big(\nabla F(x_p,\xi)-\nabla F(x_{p-1},\xi)\big)
-\nabla f_p\Big\|^2 \\
&=\Big\|\tilde \nabla f_{p-1}-\nabla f_{p-1}+\frac{1}{B_p^f}\sum_{\xi\in \mathcal{B}_p^f}\big[\nabla F(x_p,\xi)-\nabla F(x_{p-1},\xi)-(\nabla f_p-\nabla f_{p-1})\big]\Big\|^2.
\end{align*}
For brevity, define $\delta_p^f(\xi)
:=
\nabla F(x_p,\xi)-\nabla F(x_{p-1},\xi)
-(\nabla f_p-\nabla f_{p-1})$. Taking conditional expectation with respect to \(\mathcal F_p\) in the above inequality, and using the fact that $\tilde \nabla f_{p-1}$, $\nabla f_{p-1}$, $x_p$, and $x_{p-1}$ are $\mathcal F_p$-measurable, it holds that
\begin{align*}
&\mathbb E_p[\|\tilde \nabla f_p-\nabla f_p\|^2]=\|\tilde \nabla f_{p-1}-\nabla f_{p-1}\|^2+\mathbb E_p\Big[
\big\|
\frac{1}{B_p^f}\sum_{\xi\in \mathcal{B}_p^f}
\delta_p^f(\xi)\big\|^2\Big]\\
&\leq\|\tilde \nabla f_{p-1}-\nabla f_{p-1}\|^2+\frac{1}{(B_p^f)^2}\sum_{\xi\in \mathcal{B}_p^f}
\mathbb E_p[\left\|\nabla F(x_p,\xi)-\nabla F(x_{p-1},\xi)\right\|^2]\\
&\leq \|\tilde \nabla f_{p-1}-\nabla f_{p-1}\|^2+\frac{L_f^2}{B_p^f}\|x_p-x_{p-1}\|^2, 
\end{align*}
where the equality uses the relations $\mathbb E_p\!\left[
\delta_p^f(\xi)\right]=0$,
the first inequality comes from
    \begin{align*}
		2\E_p[\langle\frac{1}{B_p^f} \sum_{\xi \in \mathcal{B}_p^f} \left(\nabla F (x_p;\xi) - \nabla F (x_{p-1};\xi)\right),\nabla{f}_p-\nabla{f}_{p-1}\rangle]&=2\|\nabla{f}_p-\nabla{f}_{p-1}\|^2,
    \end{align*}
and the last inequality is due to Assumption \ref{a2}. Then taking the conditional expectation $\bar{\mathbb E}_k[\cdot]$ on both sides of the preceding inequality, and invoking the tower property together with the inclusion $\bar{\mathcal F}_k = \mathcal F_{q(k)} \subseteq \mathcal F_p$, we deduce that
\[
\bar{\mathbb E}_k\!\big[\|\tilde \nabla f_p-\nabla f_p\|^2\big]
\le
\bar{\mathbb E}_k\!\big[\|\tilde \nabla f_{p-1}-\nabla f_{p-1}\|^2\big]
+\bar{\mathbb E}_k\!\bigg[\frac{L_f^2}{B_p^f}\|x_p-x_{p-1}\|^2\bigg].
\]
Iterating the above recursion from $p=q+1$ to $p=k$ and using \eqref{eq:spider-refresh-f-bound} gives
\begin{align*}
\bar{\mathbb E}_k\big[\|\tilde \nabla f_k-\nabla f_k\|^2\big]
&\le
\bar{\mathbb E}_k\big[\|\tilde \nabla f_q-\nabla f_q\|^2\big]
+\sum_{p=q+1}^{k}\bar{\mathbb E}_k\!\bigg[\frac{L_f^2}{B_p^f}\|x_p-x_{p-1}\|^2\bigg] \\
&\le\frac{\sigma_f^2}{B_q^f}+\sum_{p=q+1}^{k}
\bar{\mathbb E}_k\!\bigg[\frac{L_f^2}{B_p^f}\|x_p-x_{p-1}\|^2\bigg].
\end{align*}
Similarly, the upper bounds for stochastic variance of $\tilde\nabla c_k$ and $\tilde c_k$ can be established as in (\ref{eq:spider-J}) and (\ref{eq:spider-c}), respectively.
\end{proof}

Building upon Lemma \ref{lem:spider-est}, we derive the following corollary, thereby confirming the validity of Assumption \ref{a5}.
\begin{corollary}\label{lem:estimator-error}
Under the setting of Lemma \ref{lem:spider-est}, and for any given constant $\bar\epsilon \in (0,1)$, suppose that the sequence $\{B_k\}$ satisfies
\begin{align}\label{eq:batch-b0}
    B_k^f=\left\lceil\frac{2\sigma_f^2}{\bar\epsilon^2}\right\rceil,\quad B_k^J=\left\lceil\frac{2m\sigma_J^2}{\bar\epsilon^2}\right\rceil,\quad B_k^c=\left\lceil\frac{2\sigma_c^2}{\bar\epsilon^2}\right\rceil,\quad\text{for $k=q(k)$},
\end{align}
and 
\be\begin{aligned}\label{eq:batch-bk}
B_k^f&=\max\left\{1,\left\lceil\frac{2\tau L_f^2\|x_k-x_{k-1}\|^2}{\bar\epsilon^2}\right\rceil\right\},\,
B_k^J=\max\left\{1,\left\lceil\frac{2\tau mL_J^2\|x_k-x_{k-1}\|^2}{\bar\epsilon^2}\right\rceil\right\},\\
B_k^c&=\max\left\{1,\left\lceil\frac{2\tau L_c^2\|x_k-x_{k-1}\|^2}{\bar\epsilon^2}\right\rceil\right\},\quad \mbox{for $k> q(k)$,}
\end{aligned}\ee
then for all \(k\geq 1\),
\begin{align}
\bar{\mathbb E}_k[\|\tilde \nabla f_k-\nabla f_k\|^2]\leq \bar\epsilon^2,\quad
\bar{\mathbb E}_k[\|\tilde \nabla c_k-\nabla c_k\|^2]\leq \bar\epsilon^2,\quad
\bar{\mathbb E}_k[\|\tilde c_k-c_k\|^2]\leq \bar\epsilon^2. \label{eq:uniform-rec}
\end{align}
\end{corollary}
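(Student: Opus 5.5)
The plan is to plug the batch-size rules \eqref{eq:batch-b0} and \eqref{eq:batch-bk} directly into the three bounds of Lemma \ref{lem:spider-est}, treating each estimator separately; the arguments are identical, so I describe the objective-gradient case and indicate the changes for $\tilde\nabla c_k$ and $\tilde c_k$.

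First, the epoch-start term. Since $q(k)$ is itself an epoch start, \eqref{eq:batch-b0} gives $B_{q(k)}^f\ge 2\sigma_f^2/\bar\epsilon^2$. Hence
\[
\frac{\sigma_f^2}{B_{q(k)}^f}\le \frac{\bar\epsilon^2}{2}.
\]
In the same way, $m\sigma_J^2/B_{q(k)}^J\le\bar\epsilon^2/2$ and $\sigma_c^2/B_{q(k)}^c\le\bar\epsilon^2/2$. When $k=q(k)$ the sum in Lemma \ref{lem:spider-est} is empty, so this already gives \eqref{eq:uniform-rec} in that case.

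Second, the accumulated terms for $p\in\{q(k)+1,\ldots,k\}$. Every such $p$ satisfies $p>q(p)=q(k)$, so \eqref{eq:batch-bk} applies. It gives, pointwise (almost surely),
\[
B_p^f\ge 2\tau L_f^2\|x_p-x_{p-1}\|^2/\bar\epsilon^2.
\]
Therefore $\frac{L_f^2}{B_p^f}\|x_p-x_{p-1}\|^2\le \bar\epsilon^2/(2\tau)$. If $x_p=x_{p-1}$ the term is zero and $B_p^f=1$ is harmless. Because this bound holds almost surely, it survives the conditional expectation $\bar{\mathbb E}_k$. The sum has $k-q(k)\le\tau-1<\tau$ terms, so it is at most $\bar\epsilon^2/2$. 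Adding the two halves yields $\bar{\mathbb E}_k[\|\tilde\nabla f_k-\nabla f_k\|^2]\le\bar\epsilon^2$. The Jacobian and constraint-value bounds follow verbatim, using the factors $mL_J^2$ and $L_c^2$ built into $B_p^J$ and $B_p^c$.

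The only point needing care, and the one I expect to be the main obstacle, is that $B_p$ is random: it depends on $x_p$ and $x_{p-1}$. I must check that Lemma \ref{lem:spider-est} remains valid with such adaptive batch sizes. This holds because $x_p$ and $x_{p-1}$ are $\mathcal F_p$-measurable, so $B_p$ is determined before the sample set $\mathcal B_p$ is drawn. The conditional variance step in that proof then goes through with $B_p$ treated as fixed given $\mathcal F_p$. Indeed, the lemma was already stated with $B_p$ inside $\bar{\mathbb E}_k$, which accommodates exactly this. Finally, since $\bar\epsilon<1$, taking $\tilde\sigma_k\equiv\bar\epsilon$ shows that Assumption \ref{a5} is realized.
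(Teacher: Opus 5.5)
Your proposal is correct and is the argument the paper intends: the paper states the corollary as a direct consequence of Lemma \ref{lem:spider-est} without writing out a proof. The refresh batch bounds the epoch-start term by $\bar\epsilon^2/2$, and each of the at most $\tau-1$ recursive terms is almost surely at most $\bar\epsilon^2/(2\tau)$; your check that the adaptive $B_p$ is $\mathcal F_p$-measurable, so the lemma's conditional-variance step still applies, is correct and is a detail the paper leaves implicit.
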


Using the simplified notation $C_m := \bar\eta\tilde C\max\{1, \rho_{\max} L_c\}$, we now proceed to analyze the oracle complexity under the parameter choices given by
\be\label{varepsilon-rm} \begin{aligned}
    &\bar\epsilon=\frac{\epsilon}{8(2C_{m}+(\eta_{\max}L_{
    \max}+1)^{\frac{1}{2}}{\bar\eta}^{\frac{1}{2}}C_s)},\quad \tilde\sigma_k\equiv \bar\epsilon,\quad\forall k\geq1.
\end{aligned}\ee
The following theorem characterizes the oracle complexity of Algorithm \ref{alg1} to reach a stochastic $\epsilon$-KKT point of (\ref{p}).
\begin{theorem}
    Suppose that Assumptions \ref{a1}-\ref{a4}  hold. Let the parameters of Algorithm \ref{alg1} be chosen as in Corollary \ref{lem:estimator-error} along with (\ref{varepsilon-rm}), and set $\tau=\lceil\epsilon^{-1}\rceil$ and
    \begin{align}\label{K-mb}
    K = \left\lceil64\left(\frac{f(x_1)-f_{\rm low}+2MGL_c/\nu^2+\rho_{\max}M^2}{ \eta_{\rm min}}\right)\epsilon^{-2}\right\rceil.
    \end{align}
    Then the algorithm reaches a stochastic $\epsilon$-KKT point within $K$ iterations. Moreover, the expected number of evaluations required for the stochastic objective gradients, stochastic constraint gradients, and constraint values are each of order $\mathcal{O}(\epsilon^{-3})$.
\end{theorem}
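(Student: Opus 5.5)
The plan has two parts: the stationarity bound from (\ref{eq:s2}), and the oracle count via Lemma \ref{le:sum-sk}.

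\emph{Stationarity.} By Corollary \ref{lem:estimator-error}, the batch sizes (\ref{eq:batch-b0})--(\ref{eq:batch-bk}) realize Assumption \ref{a5} with $\tilde\sigma_k\equiv\bar\epsilon<1$. Hence (\ref{eq:s2}) and (\ref{eq:sum-sk}) both hold.

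The next step is to bound $\calL(x_1,\rho_1)-\calL(x_{K+1},\rho_{K+1})$ from above. Assumption \ref{a1} together with $\|\lambda(x)\|\le \|(\nabla c^\top\nabla c)^{-1}\|\|\nabla c\|\|\nabla f\|\le GL_c/\nu^2$ gives
\[
\calL(x_1,\rho_1)\le f(x_1)+MGL_c/\nu^2+\rho_{\max}M^2/2,\qquad \calL(x_{K+1},\rho_{K+1})\ge f_{\rm low}-MGL_c/\nu^2 .
\]
So the difference is at most $f(x_1)-f_{\rm low}+2MGL_c/\nu^2+\rho_{\max}M^2/2$.

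Write $a:=\frac1K\sum_k\E[\|{\rm P}_{V_k}\nabla f_k\|^2+\|c_k\|^2]$. Two estimates then control the right-hand side of (\ref{eq:s2}):
\begin{itemize}
\item By Cauchy--Schwarz (Jensen on the average), the last term is at most $4C_m\bar\epsilon\cdot\sqrt2\,a^{1/2}$.
\item The variance term is at most $\bar\eta C_s^2(4\eta_{\max}L_{\max}+2)\bar\epsilon^2$.
\end{itemize}
This gives a quadratic inequality
\[
a\le A/K+B\bar\epsilon^2+D\bar\epsilon\, a^{1/2}.
\]
It implies $a\le 2A/K+2B\bar\epsilon^2+D^2\bar\epsilon^2$. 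The choices (\ref{varepsilon-rm}) for $\bar\epsilon$ and (\ref{K-mb}) for $K$ are designed so that each piece is at most a fraction of $\epsilon^2$, giving $a\le\epsilon^2$.

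Since $R$ is uniform, $\E\|{\rm P}_{V_R}\nabla f_R\|\le(\E\|{\rm P}_{V_R}\nabla f_R\|^2)^{1/2}\le a^{1/2}$, and the same bound holds for $\|c_R\|$. With $\nabla f+\nabla c\lambda={\rm P}_V\nabla f$, this is exactly (\ref{def-KKT}). Checking the numerical constants (the factor 64, and the $\sqrt2$ and 4's) is routine bookkeeping.

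\emph{Oracle complexity.} There are $\lceil K/\tau\rceil=\mathcal O(\epsilon^{-1})$ epoch starts. Each uses a batch of size $\mathcal O(\bar\epsilon^{-2})=\mathcal O(\epsilon^{-2})$, for a total of $\mathcal O(\epsilon^{-3})$.

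For $k>q(k)$, the batch is $B_k\le 1+2\tau L^2\eta_{k-1}^2\|\tilde s_{k-1}\|^2/\bar\epsilon^2$, so
\[
\E\sum_k B_k\le K+\frac{2\tau L^2\eta_{\max}^2}{\bar\epsilon^2}\sum_k\E\|\tilde s_k\|^2 .
\]
Apply Lemma \ref{le:sum-sk}. Its right-hand side is $\mathcal O(1)+\mathcal O(K\bar\epsilon^2)+\mathcal O(C_m\bar\epsilon\sum_k(\cdots))$. By Cauchy--Schwarz the last sum is at most $\sqrt{2}K a^{1/2}\bar\epsilon$, which by the first part is $\mathcal O(K\epsilon^2)$. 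Since $K=\Theta(\epsilon^{-2})$, we get $\sum_k\E\|\tilde s_k\|^2=\mathcal O(1)$. Hence the inner-iteration oracle cost is $\mathcal O(K+\tau\bar\epsilon^{-2})=\mathcal O(\epsilon^{-2}+\epsilon^{-3})$. This holds for $\nabla F$, $\nabla C$ and $C$ alike, since each batch rule has the same form up to constants ($m$, $L_J$, $L_c$).

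\emph{Main obstacle.} The hard step is showing that $\sum_k\E\|\tilde s_k\|^2$ is $\mathcal O(1)$ rather than $\mathcal O(K)$. Without this, the recursive batches would cost $\tau K\bar\epsilon^{-2}=\epsilon^{-5}$. The argument needs the cross term in (\ref{eq:sum-sk}) to be absorbed using the already-established bound on $a$, and the quadratic-inequality solution has to be handled carefully so that $\mathcal O(1)$ constants do not hide any $\epsilon$-dependence; in particular, $\rho_{\max}$, $\eta_{\min}$ and $C_m$ must be independent of $\epsilon$.
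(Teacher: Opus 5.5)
Your proposal is correct and follows the paper's proof. You realize Assumption~\ref{a5} via Corollary~\ref{lem:estimator-error}, bound the merit gap by $f(x_1)-f_{\rm low}+2MGL_c/\nu^2+\rho_{\max}M^2/2$, solve the quadratic inequality from (\ref{eq:s2}) with the given $\bar\epsilon$ and $K$, and then feed that bound into Lemma~\ref{le:sum-sk} to get $\sum_k\E\|\tilde s_k\|^2=\mathcal O(1)$ and hence the $\mathcal O(\epsilon^{-3})$ count. The only difference is that you solve the quadratic inequality in $a^{1/2}$ rather than in the average of $\mathcal E_k:=(\E\|{\rm P}_{V_k}\nabla f_k\|^2)^{1/2}+(\E\|c_k\|^2)^{1/2}$, and this makes your last step $\E\|{\rm P}_{V_R}\nabla f_R\|\le a^{1/2}$ cleaner than the paper's: its displayed equality $(\E\|{\rm P}_{V_R}\nabla f_R\|^2)^{1/2}+(\E\|c_R\|^2)^{1/2}=\frac1K\sum_k\mathcal E_k$ holds in general only as ``$\ge$'' (the paper's conclusion survives if Jensen is applied to each $k$ separately).
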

\begin{proof}
    For notational simplicity, we define $\mathcal{E}_k:=(\E[\|{\rm P}_{V_k}\nabla f_k\|^2]\big)^{\frac{1}{2}}+(\E[\|c_k\|^2]\big)^{\frac{1}{2}}$. It follows from Jensen's inequality that
    \begin{align*}
         \Big( \frac{1}{K}\sum_{k=1}^{K}\mathcal{E}_k \Big)^2 \le \frac{1}{K}\sum_{k=1}^{K}\mathcal{E}_k^2\le \frac{2}{K}\sum_{k=1}^{K}
    \left(\E[\|{\rm P}_{V_k} \nabla f_k\|^2+\|c_k\|^2]\right),
    \end{align*}
    %\begin{align*}
    %&\bigg( \frac{\sum_{k=1}^{K} (\E[\|{\rm P}_{V_k} \nabla f_k\|^2])^{1/2} + (\E[\|c_k\|^2])^{1/2}}{K} \bigg)^2 \\
    %&\le \frac{1}{K}\sum_{k=1}^{K}
    %\left((\E[\|{\rm P}_{V_k} \nabla f_k\|^2])^{1/2}+(\E[\|c_k\|^2])^{1/2}\right)^2\le \frac{2}{K}\sum_{k=1}^{K}
    %\left(\E[\|{\rm P}_{V_k} \nabla f_k\|^2+\|c_k\|^2]\right),
   %\end{align*}
    which together with   \eqref{eq:s2} gives 
    \begin{align}
        %&\left( \frac{\sum_{k=1}^{K} (\E[\|{\rm P}_{V_k} \nabla f_k\|^2])^{1/2} + (\E[\|c_k\|^2])^{1/2}}{K} \right)^2\label{conv2}\\
        \Big( \frac{1}{K}\sum_{k=1}^{K}\mathcal{E}_k \Big)^2&\le \frac{8\E[\calL(x_1,\rho_1)-\calL(x_{K+1},\rho_{K+1})]+4M^2\rho_{\max}}{\eta_{\min}K}\notag\\
        &\quad+\frac{8\bar\eta C_s^2}{K}\sum_{k=1}^{K}(\eta_{\max}L_{\max}+\frac12)\tilde\sigma_k^2 + 8\bar\eta\tilde C\max\{1,\rho_{\max} L_c\}\frac{1}{K} \sum_{k=1}^{K} \tilde\sigma_k\mathcal{E}_k.\label{conv2} 
    \end{align}
    By the definition of $\mathcal{L}(x,\rho)$ given in \eqref{ALM}, it holds that
    \begin{align*}
        &\E[\calL(x_1,\rho_1)-\calL(x_{K+1},\rho_{K+1})]\\&\leq\E[ f(x_1)+\lambda(x_1)^\top c(x_1)+\frac{\rho_1}{2}\|c(x_1)\|^2-f_{\rm low}-\lambda(x_{K+1})^\top c(x_{K+1})]\\
        &\leq f(x_1)-f_{\rm low}+\frac{2}{\nu^2}MGL_c +\frac{1}{2}\rho_{\max}M^2.
    \end{align*}
     It then follows by solving the quadratic inequality obtained from (\ref{conv2}) that
    \begin{align*}
        %&\frac{1}{K}\sum_{k=1}^{K}\left(\big(\E[\|{\rm P}_{V_k}\nabla f_k\|^2]\big)^{1/2}+\big(\E[\|c_k\|^2]\big)^{1/2}\right)\\
        & \frac{1}{K}\sum_{k=1}^{K}\mathcal{E}_k\\
        &\leq4\bigg(2C_{m}\bar\epsilon+\bigg(\frac{\eta_{\max}L_{\max}+1}{2}\bar\eta C_s^2\bar\epsilon^2+\frac{M^2\rho_{\max}}{4\eta_{\min}K}+\frac{\calL(x_1,\rho_1)-\calL(x_{K+1},\rho_{K+1})}{2\eta_{\min}K}\bigg)^{\frac{1}{2}}\bigg)\\     &\leq4\bigg(2C_{m}\bar\epsilon+ (\eta_{\max} L_{\max}+1)^{\frac{1}{2}}{\bar\eta}^\frac{1}{2}C_s\bar\epsilon+\frac{ (f(x_1)-f_{\rm low}+\frac{2MGL_c}{\nu^2}+\rho_{\max}M^2)^{\frac{1}{2}}}{\eta_{\min}^{1/2} K^{1/2}}\bigg),
    \end{align*}
     where both inequalities are a direct consequence of $(a+b)^{\frac12}\leq a^{\frac12}+b^{\frac12}$. The settings of $\bar\epsilon$ and $K$ in (\ref{varepsilon-rm}) and (\ref{K-mb}) further ensure that
     \begin{align*}
     %\frac{1}{K}\sum_{k=1}^{K}\E\big[\|{\rm P}_{V_k} \nabla f_k\| + \|c_k\|\big]\le
     \mathbb E\big[\|{\rm P}_{V_R}\nabla f_R\|+\|c_R\|\big]\leq\big(\E[\|{\rm P}_{V_R}\nabla f_R\|^2]\big)^{\frac12}
      +\big(\E[\|c_R\|^2]\big)^{\frac12}= \frac{1}{K}\sum_{k=1}^{K}\mathcal{E}_k\leq\epsilon.
    \end{align*}
     Consequently, the algorithm reaches a stochastic $\epsilon$-KKT point in $K$ iterations by (\ref{def-KKT}). Accordingly, the total number of evaluations for the stochastic objective gradient is
    \begin{align}\label{eq:N_f}
        &N_f(K):=\sum_{k=1}^{K} B_k^f \le \left\lceil \frac{K}{\tau}\right\rceil \frac{2\sigma_f^2}{\bar\epsilon^2}+ K+ \frac{2\tau L_f^2}{\bar\epsilon^2}\sum_{k=1}^{K-1}\|x_{k+1}-x_k\|^2.
    \end{align}
    Recall from (\ref{eq:sum-sk}) that
    \begin{align}\label{eq:xk+1-xk}
        &\sum_{k=1}^{K}\E[\|x_{k+1}-x_k\|^2]=\sum_{k=1}^{K}\E[\eta_k^2\|\tilde{s}_k\|^2]\\
        &\leq 8\bar\eta\eta_{\max}\E[\calL(x_1,\rho_1)-\calL(x_{K+1},\rho_{K+1})]+8\bar\eta\eta_{\max}^2 C_s^2 \sum_{k=1}^{K-1} \tilde\sigma_k^2+4\bar\eta\eta_{\max}M^2\rho_{\max}\notag\\
        &\quad+ 8\bar\eta\eta_{\max}^2\tilde C\max\{1,\rho_{\max} L_c\}\sum_{k=1}^{K}\tilde\sigma_k\mathcal{E}_k\notag\\
        &\leq 8\bar\eta\eta_{\max}\big(f(x_1)-f_{\rm low}+\frac{2MGL_c}{\nu^2}+\rho_{\max}M^2\big)+8K\bar\eta\eta_{\max}^2(C_s^2\bar\epsilon^2+C_m\bar\epsilon\epsilon)=\mathcal{O}(1).\notag
    \end{align}
   Taking the expectation of (\ref{eq:N_f}) in conjunction with \eqref{eq:xk+1-xk} leads to
    \begin{align*}
        \E[N_f(K)]=\mathcal{O}(\epsilon^{-3}).
    \end{align*}
     The same argument applies to stochastic gradient and function value evaluations of the constraints, yielding the stated oracle complexity of the proposed method.
\end{proof}
\begin{remark}
The oracle complexity result is established in expectation, which is consistent with the present analysis framework. The argument based on the good/bad event decomposition requires per-iteration control of the bad-event probability, as shown in Lemma \ref{lem:bad-event}. Hence, the analysis relies on conditional second-moment bounds for the stochastic estimators at each iteration, rather than only an averaged error bound along the trajectory. For the SPIDER-type recursion, such a requirement naturally  leads to batch sizes depending on the local increment $\|x_k-x_{k-1}\|^2$, as reflected in Corollary \ref{lem:estimator-error}. Since these local increments are controlled only after summing over the iterates and taking expectations, the resulting oracle complexity statement is obtained in expectation.
\end{remark}
\section{Numerical simulation}
In this section, we assess the numerical performance of Algorithm \ref{alg1} by comparing it with several existing algorithms for solving \eqref{p} on test problems from the CUTEst collection \cite{GOT2015} and a nonconvex fairness-constrained problem \cite{LCLLX2024,MLY2020}.

\subsection{Experiments on CUTEst problems}
In this experiment, we construct a benchmark set of $138$ equality-constrained problems from CUTEst. All the test problems satisfy $n+m\le 1000$, where $n$ and $m$ denote the numbers of variables and constraints, respectively.  %{Specifically, these instances are selected according to the following criteria: (i) successful initialization} 
To emulate the fully-stochastic setting considered in this paper, we generate stochastic oracle samples from the deterministic CUTEst problems by adding independent zero-mean Gaussian perturbations to the objective gradient, the constraint value, and the constraint Jacobian. We conduct two sets of experiments, with the noise level fixed at \(10^{-2}\) and \(10^{-4}\), respectively.

{We compare the proposed Algorithm \ref{alg1} (shorted as FSFO) with Stoc-iALM \cite{LCLLX2024}, as well as Algorithm 2 in \cite{AW2024} and Algorithm 1 in \cite{SZZ2025}, referred to as SLQPM and SSQP, respectively.} Throughout the experiments, the comparison is made under the same stochastic sample budget for all methods. 
A run is terminated when the total number of stochastic samples
\[N_{\rm sam}:= N_{\nabla f}+N_c+N_{\nabla c}\]
reaches $3000$, where \(N_{\nabla f}\), \(N_c\), and \(N_{\nabla c}\) count the total numbers of samples used to approximate objective gradients, constraint values, and constraint Jacobians, respectively. For each method, the batch size is selected from \(\{1,2,5,10,20\}\), and the same random seed is used for all methods on each test problem. 

To make the parameter tuning comparable across methods, each method is assigned the same tuning budget of $75$ parameter candidates on each test problem. The candidate sets are chosen separately for different methods, but the tuned parameters are restricted to those playing the main roles in the corresponding algorithms. For FSFO, we tune the normal feasibility weight \(w\) and the stepsize scaling parameter \(\eta\). For SSQP, we tune two auxiliary parameters used to determine the stepsize. For SLQPM, we tune the gradient stepsize and the recursive-momentum parameter. For Stoc-iALM, we tune the stepsize scale and the parameters controlling the penalty and multiplier updates, as well as the inner-loop budget. For each method, we choose the best parameter candidate on each test problem according to a feasibility-prioritized criterion. Specifically, we identify the iterate with the smallest stationarity error among all generated iterates whose feasibility error is no larger than \(10^{-4}\), and select the corresponding parameter candidate. 
If this feasibility threshold is not reached, we instead select the parameter candidate corresponding to the iterate with the smallest feasibility error. To assess the quality of the returned points across different methods, we define the score of the selected iterate by the aggregate KKT residual
\[
  \max\{\|\nabla f(x)+\nabla c(x)\lambda\|_\infty,\,\|c(x)\|_\infty\}.
\]
\paragraph{Results}
Table \ref{tab:semantic-fair-summary} summarizes the numerical comparison under noise levels \(10^{-2}\) and \(10^{-4}\), respectively. 
In each table, the column ``wins'' reports the number of test problems on which a method attains the smallest aggregate score among the four methods. 
Figure \ref{fig:semantic-fair-boxplot-2}  reports the distributions of the stationarity and feasibility errors under the two noise levels. 

\begin{table}[htbp]
\caption{Summary of the comparison under two noise levels.}
\label{tab:semantic-fair-summary}
\centering
\normalsize
\setlength{\tabcolsep}{4pt}
\begin{tabular}{cccccc}
\hline
Noise & Algorithm & Wins & Median score & Median stationarity & Median feasibility \\
\hline
\multirow{4}{*}{\(10^{-2}\)}
& FSFO    & 103 & \(1.997\times10^{-3}\) & \(3.331\times10^{-4}\) & \(1.404\times10^{-3}\) \\
& SSQP      & 25  & \(3.454\times10^{-3}\) & \(1.132\times10^{-3}\) & \(2.135\times10^{-3}\) \\
& SLQPM     & 12  & \(8.388\times10^{-2}\) & \(9.467\times10^{-3}\) & \(1.640\times10^{-2}\) \\
& Stoc-iALM & 8   & \(9.364\times10^{-2}\) & \(9.534\times10^{-3}\) & \(4.634\times10^{-2}\) \\
\hline
\multirow{4}{*}{\(10^{-4}\)}
& FSFO    & 104 & \(2.606\times10^{-4}\) & \(5.913\times10^{-5}\) & \(1.191\times10^{-4}\) \\
& SSQP      & 26  & \(5.829\times10^{-4}\) & \(1.413\times10^{-4}\) & \(2.797\times10^{-4}\) \\
& SLQPM     & 10  & \(7.993\times10^{-2}\) & \(9.482\times10^{-3}\) & \(1.397\times10^{-2}\) \\
& Stoc-iALM & 7   & \(9.164\times10^{-2}\) & \(2.654\times10^{-3}\) & \(3.681\times10^{-2}\) \\
\hline
\end{tabular}
\end{table}
\begin{figure}[htbp]
\centering
\includegraphics[width=\textwidth]{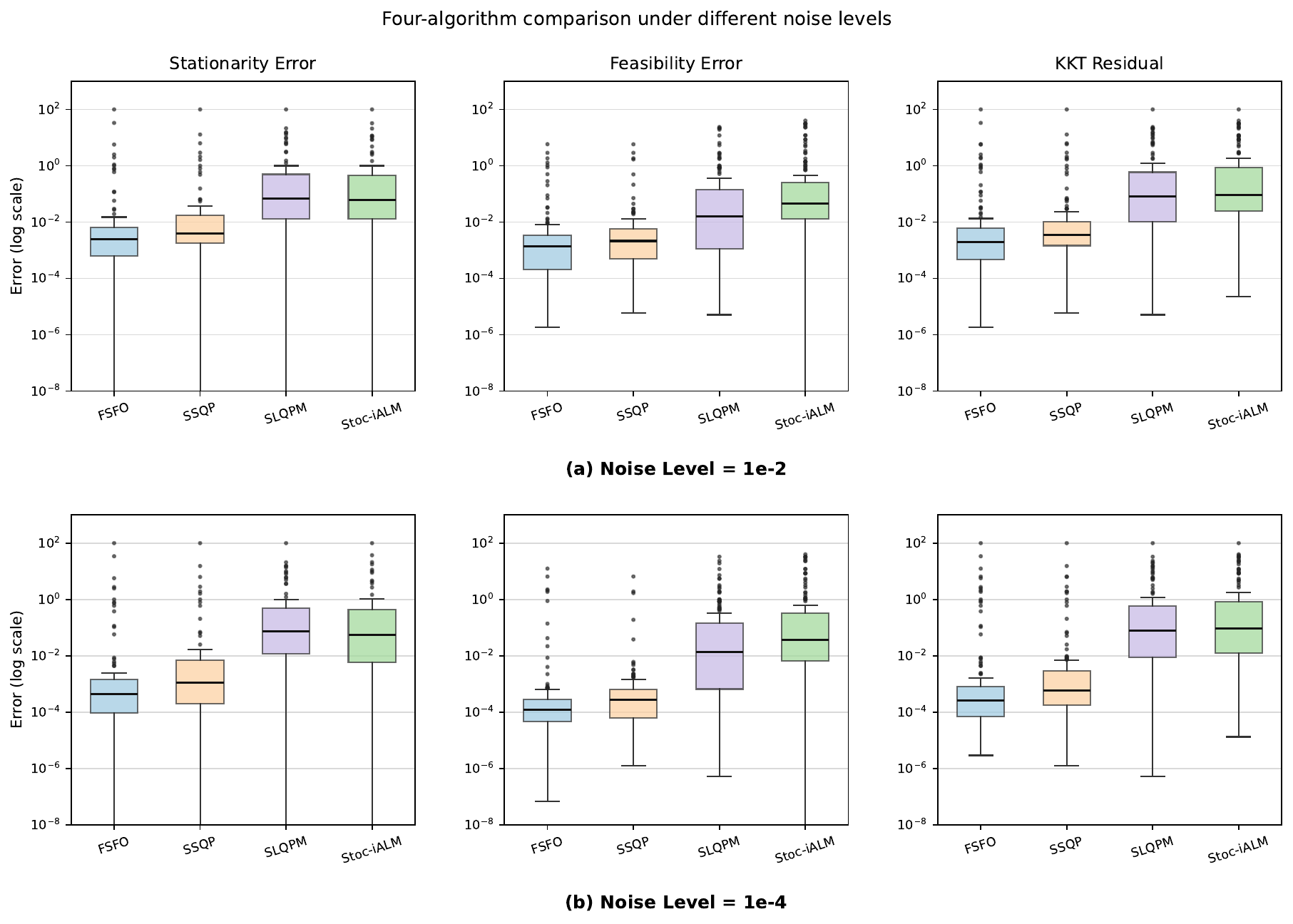}
\vspace{-1em}
\caption{Box plots of stationarity, feasibility, and KKT residuals under different noise levels.}
\label{fig:semantic-fair-boxplot-2}
\end{figure}

These results show that FSFO provides a balanced reduction of the two residuals, with more wins and lower median values for both KKT (stationarity and feasibility) errors under both noise levels, as reported in Table \ref{tab:semantic-fair-summary}. The boxplots in Figure \ref{fig:semantic-fair-boxplot-2}  further show that the three residual distributions of FSFO are concentrated at relatively low levels over the selected problems. SSQP shows distributional behavior comparable to FSFO, although the median score and the two median residuals reported in Table \ref{tab:semantic-fair-summary} are slightly larger. SLQPM and Stoc-iALM are less competitive in these experiments, showing relatively larger residuals across the three error measures. Overall, the results indicate that FSFO exhibits favorable numerical performance in these experiments.

\subsection{Nonconvex fairness constrained problem}
We next examine the performance of Algorithm \ref{alg1} on a fairness-constrained classification problem with a nonlinear constraint studied in \cite{LCLLX2024,MLY2020}. Let $\mathcal{D}:=\{(a_i,b_i)\}_{i=1}^{|\mathcal{D}|}$ be a labeled dataset, where $a_i\in\mathbb{R}^n$ and $b_i\in\{-1,1\}$ denote the feature vector and the corresponding class label, respectively. 
For $x\in\mathbb{R}^n$, define 
$\ell(x;a,b):=\log(1+\exp(-ba^\top x))$ and 
$F(x;a,b):=\varphi_\alpha(\ell(x;a,b))$, where 
$\varphi_\alpha(t)=\alpha\log(1+t/\alpha)$ with $\alpha=2$. Let $\mathcal{S}=\{a_j\}_{j=1}^{N_S}$ be a possibly unlabeled dataset, with $\mathcal{S}_{\min}\subseteq\mathcal{S}$ denoting the subset corresponding to the minority group. Following \cite{LCLLX2024}, the resulting nonconvex fairness-constrained classification model is given by
\begin{equation}
\label{eq:fairness}
\begin{aligned}
    \min_{x\in\mathbb{R}^n}\quad 
    & f(x):=\frac{1}{|\mathcal{D}|}\sum_{(a,b)\in\mathcal{D}} F(x;a,b),\\
    \mathrm{s.t.}\quad
    & c(x):=\omega\sum_{a\in\mathcal{S}}\sigma(a^\top x)
    -
    \sum_{a\in\mathcal{S}_{\min}}\sigma(a^\top x)
    \leq 0,
\end{aligned}
\end{equation}
where $\sigma(t):=\exp(t)/(1+\exp(t))$ and $\omega\in(0,1)$ is a prescribed fairness parameter. The fairness constraint in (\ref{eq:fairness}) is imposed to ensure that the classifier yields a sufficient level of positive predictions for the minority group. We test this problem on the $a9a$ dataset from the LIBSVM library \cite{LIBSVM}, with $n=123$ and $(|\mathcal{D}|,|\mathcal{S}|,|\mathcal{S}_{\min}|)=(32561,16281,1561)$, and on the $loan$ dataset from LendingClub used in \cite{LCLLX2024}, with $n=250$ and $(|\mathcal{D}|,|\mathcal{S}|,|\mathcal{S}_{\min}|)=(64485,63890,31966)$. We set $\omega=0.3$ for $a9a$ and $\omega=0.5$ for $loan$, and compare Algorithm \ref{alg1} (FSFO) with Stoc-iALM \cite{LCLLX2024}, SSQP \cite{SZZ2025}, and SLQPM \cite{AW2024}.

In the numerical experiments, following \cite{LCLLX2024}, we introduce a slack variable $s\geq 0$ and reformulate the inequality constraint as $c(x)+s=0$ for the algorithmic updates. The total sample budget is set to \(4\times 10^4\) on $a9a$ and \(2\times 10^4\) on $loan$ for all methods. We summarize below the key parameter settings used in the experiments. For FSFO, we set the refresh period of the SPIDER estimators to $\tau=2$ on both datasets, use the same batch sizes in the refresh and recursive steps, and set the stepsize as $\eta_k=0.25c_\eta/L_k$. On $a9a$, we set $\rho_0=0.5$, $c_\eta=175$ and use $(B^f_k,B^c_k,B^J_k)=(12,64,64)$, while on $loan$, we set $\rho_0=0.26$, $c_\eta=320$ and $(B^f_k,B^c_k,B^J_k)=(12,18,18)$. For SSQP, the objective gradient, constraint value, and constraint Jacobian are estimated using mini-batches. On $a9a$, we use $(B^f_k,B^c_k,B^J_k)=(16,64,64)$ with initial stepsize parameter $\beta=0.9$, initial merit parameter $\tau_0=0.08$, and Hessian approximation $H=30I$. On $loan$, we use $(B^f_k,B^c_k,B^J_k)=(16,16,16)$ and set $\beta=1$, $\tau_0=0.1$, $H=1.5I$. For SLQPM, on the $a9a$ dataset, the penalty parameter, stepsize, and momentum parameter are given by $\rho_k=\rho k^{0.4}$, $\eta_k=\eta/(\rho_k( k+1)^{0.35})$, $\alpha_k=\alpha k^{-0.8}$, where $\rho=8$, $\eta=0.75$ and $\alpha=72/81$. On the $loan$ dataset, we set $\rho_k=\rho k^{0.2}$, $\eta_k=\eta/(\rho_k( k+1)^{0.6})$ with $\rho=1.2$, $\eta=0.25$, while keeping the setting of $\alpha_k$ unchanged. For Stoc-iALM, we thank the authors of \cite{LCLLX2024} for kindly sharing their implementation, from which we retain several parameter settings while making the adjustments required for our experimental setup. On $loan$, we retain the penalty update with $\beta_0=1$ and growth factor $2.5$, and use a PStorm mini-batch size of $30$. On $a9a$, we set $\beta_0=120$, the growth factor to $1.55$, and the PStorm inner batch size to $20$. We record the objective value $f(x)$, the original signed constraint value $c(x)$, and the KKT residual for \eqref{eq:fairness} as the cumulative number of stochastic oracle samples increases. To evaluate the KKT residual, for each iterate $x$, we compute a nonnegative multiplier by solving ${\min}_{z\geq0}\, \left\{ \|\nabla f(x)+z\nabla c(x)\|^2 + |zc(x)|^2 \right\},$
and define 
\begin{align*}
    \operatorname{Res}_{\rm KKT}(x) := \left( \|\nabla f(x)+z\nabla c(x)\|^2 + [c(x)]_+^2 + |zc(x)|^2 \right)^{1/2},
\end{align*}
where $[t]_+:=\max\{t,0\}$.  As the algorithms are randomized, the numerical results reported below are averaged over five independent runs with different random seeds for each method, with the shaded regions representing the corresponding standard deviations.

\begin{figure}[htbp]
\centering
\includegraphics[width=\textwidth]{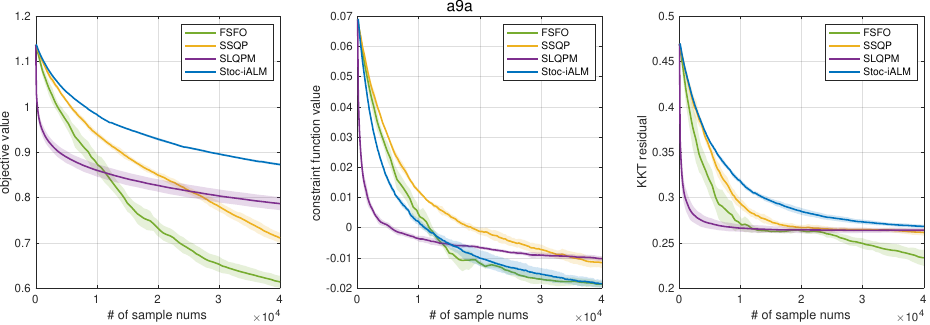}
\vspace{-1em}
\caption{Comparison of  FSFO, SLQPM, SSQP and Stoc-iALM on the $a9a$ dataset.}
\label{fig:a9a}
\end{figure}
%\vspace{-1em}
\begin{figure}[htbp]
\centering
\includegraphics[width=\textwidth]{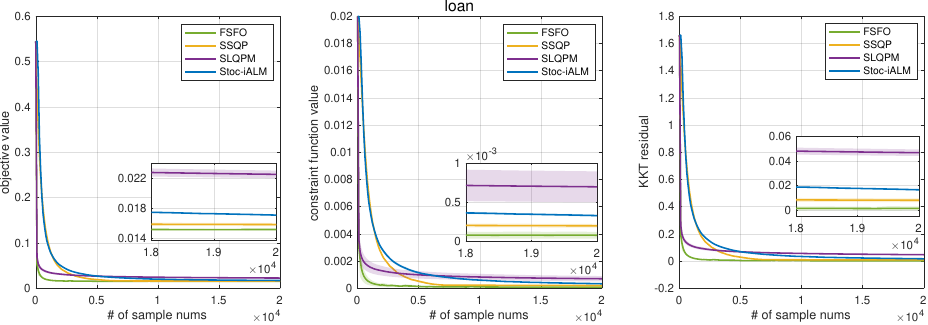}
\vspace{-1em}
\caption{Comparison of  FSFO, SLQPM, SSQP and Stoc-iALM on the $loan$ dataset.}
\label{fig:loan}
\end{figure}
Figures \ref{fig:a9a} and \ref{fig:loan} present the performance of FSFO in comparison with the three other algorithms on the $a9a$ and $loan$ datasets, respectively. As shown in Figure \ref{fig:a9a}, while all methods eventually satisfy the inequality constraint on the $a9a$ dataset, FSFO yields the lowest objective value and KKT residual within the prescribed sample budget. For the $loan$ dataset, the results in Figure \ref{fig:loan} indicate that all methods rapidly reduce the three measures during the initial stage and exhibit similar performance overall, while FSFO achieves a somewhat faster reduction and slightly lower terminal values, as further illustrated by the enlarged views.
In summary, these results suggest that FSFO is competitive with the existing methods in the fairness-constrained classification experiment.

\section{Conclusion}
This paper studies nonconvex stochastic equality-constrained optimization and proposes {a stochastic first-order method.} At each iteration, the method computes a search direction consisting of a tangential descent component and a normal component {for feasibility improvement using truncated SPIDER-type recursive estimators}, and employ the  Fletcher's augmented Lagrangian as a smooth merit function to updates the step size. 
{Under the additional Lipschitz continuity of the second order derivatives of the objective and constraint functions, together with a smallest singular value-based event decomposition, we derive that} the method attains a stochastic $\epsilon$-KKT point within $\mathcal{O}(\epsilon^{-2})$ iterations with an expected oracle complexity of $\mathcal{O}(\epsilon^{-3})$. This matches the best known stochastic gradient complexity and improves the known order for stochastic constraint value evaluations in the fully-stochastic setting. We finally report numerical results to illustrate the performance of the proposed method.

\appendix
\section{Auxiliary Lemmas}\label{app:lemma}

\begin{lemma}[Sherman-Morrison-Woodbury Formula]\label{lm:smw}
Let \( A, B \in\R^{m\times m} \) be invertible matrices, then
\(
A^{-1} - B^{-1} = A^{-1} (B - A) B^{-1}.
\)
\end{lemma}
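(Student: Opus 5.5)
The identity is purely algebraic, so I will verify it by direct expansion of the right-hand side, using only the invertibility of $A$ and $B$ and associativity of matrix multiplication. No earlier result of the paper is needed.

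Distributing the product gives
\[
A^{-1}(B-A)B^{-1} = A^{-1}BB^{-1} - A^{-1}AB^{-1}.
\]
By invertibility, $BB^{-1}=I_m$ and $A^{-1}A=I_m$. The first term therefore reduces to $A^{-1}$ and the second to $B^{-1}$, so the right-hand side equals $A^{-1}-B^{-1}$, which is the claim.

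There is no genuine obstacle. The only point needing care is the order of the factors, since matrices do not commute. The difference $B-A$ must sit between $A^{-1}$ on the left and $B^{-1}$ on the right, so that each cancellation pairs a matrix with its own inverse.

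In the paper, the lemma is applied with $A=\tilde\nabla c_k^\top\tilde\nabla c_k$ and $B=\nabla c_k^\top\nabla c_k$. Both are invertible on the good event $\mathcal G_k$ (smallest singular value at least $\nu/2$) and under Assumption~\ref{a4} (smallest singular value at least $\nu$), so the hypotheses are met there.
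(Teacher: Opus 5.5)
Your direct expansion $A^{-1}(B-A)B^{-1}=A^{-1}BB^{-1}-A^{-1}AB^{-1}=A^{-1}-B^{-1}$ is correct and complete, and it is essentially the paper's approach: the paper states this identity without proof as a standard fact, and this one-line verification is the implicit justification. Your remark about its application with $A=\tilde\nabla c_k^\top\tilde\nabla c_k$ and $B=\nabla c_k^\top\nabla c_k$ on $\mathcal G_k$ matches Lemma~\ref{le:s-error}; there invertibility implicitly uses $m\le n$, so that a positive smallest singular value means full column rank, and the paper applies the identity the same way to $\nabla c(x)^\top\nabla c(x)$ and $\nabla c(y)^\top\nabla c(y)$ in the proof of Proposition~\ref{le:lambda}.
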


\begin{lemma}[Weyl's inequality]
\label{lem:weyl}
For any \(A,B \in \mathbb{R}^{m\times n}\), it holds that
\(|\mu_i(A)-\mu_i(B)| \le \|A-B\|_2,\) \( i=1,\ldots,\min\{m,n\},\) and \(
\mu_{\min}(A)\ge \mu_{\min}(B)-\|A-B\|_2,\) 
where \(\mu_i(\cdot)\) denotes the \(i\)-th singular value and \(\mu_{\min}\) refers to the smallest singular value.
\end{lemma}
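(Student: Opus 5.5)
We will derive both inequalities from the Courant--Fischer (min--max) characterization of singular values. Order the singular values of a matrix $A\in\R^{m\times n}$ decreasingly, $\mu_1(A)\ge\cdots\ge\mu_p(A)$ with $p:=\min\{m,n\}$. The first step is to recall, or briefly justify via the spectral theorem applied to $A^\top A$, that for each $i=1,\ldots,p$,
\begin{equation*}
\mu_i(A)=\max_{\substack{S\subseteq\R^n\\ \dim S=i}}\ \min_{\substack{x\in S\\ \|x\|=1}}\|Ax\|.
\end{equation*}
For completeness we would sketch why. The eigenvalues of $A^\top A$ are $\mu_1(A)^2,\ldots,\mu_p(A)^2$, followed by zeros when $n>m$. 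Courant--Fischer for the symmetric matrix $A^\top A$ gives the $i$-th largest eigenvalue as $\max_{\dim S=i}\min_{x\in S,\|x\|=1}x^\top A^\top Ax$. Since $x^\top A^\top A x=\|Ax\|^2$, taking square roots yields the formula. The restriction $i\le p$ ensures that the $i$-th eigenvalue is exactly $\mu_i(A)^2$.

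Next, fix $i$ and let $S^\ast$ be a subspace of dimension $i$ attaining the maximum for $B$. For every unit vector $x\in S^\ast$, the triangle inequality gives
\begin{equation*}
\|Ax\|\ge\|Bx\|-\|(A-B)x\|\ge\|Bx\|-\|A-B\|_2 .
\end{equation*}
Minimizing over unit $x\in S^\ast$ shows that $\min_{x\in S^\ast,\|x\|=1}\|Ax\|\ge\mu_i(B)-\|A-B\|_2$. Because $S^\ast$ is one admissible subspace in the maximization defining $\mu_i(A)$, we conclude $\mu_i(A)\ge\mu_i(B)-\|A-B\|_2$. Exchanging the roles of $A$ and $B$ gives the reverse inequality, and together they yield $|\mu_i(A)-\mu_i(B)|\le\|A-B\|_2$.

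The statement about the smallest singular value is then the case $i=p$, since by definition $\mu_{\min}=\mu_p$. We only need the one-sided bound $\mu_{\min}(A)\ge\mu_{\min}(B)-\|A-B\|_2$.

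We do not expect a genuine obstacle here. The only point needing care is the convention when $m<n$: there $A^\top A$ has $n-m$ extra zero eigenvalues, so $\mu_{\min}$ must mean the $p$-th singular value, which is how the paper uses it for the $n\times m$ Jacobians. One must also check that the variational formula is invoked only for $i\le p$. An alternative route avoids this bookkeeping: apply Weyl's eigenvalue inequality to the symmetric dilations $\begin{bmatrix}0&A\\ A^\top&0\end{bmatrix}$ and $\begin{bmatrix}0&B\\ B^\top&0\end{bmatrix}$, whose eigenvalues are $\pm\mu_i(\cdot)$ together with zeros, and whose difference has spectral norm $\|A-B\|_2$.
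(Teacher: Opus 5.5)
Your proof is correct and complete. The Courant--Fischer characterization $\mu_i(A)=\max_{\dim S=i}\min_{x\in S,\|x\|=1}\|Ax\|$ holds for $i\le p=\min\{m,n\}$. Testing $A$ on an optimal $i$-dimensional subspace for $B$ and then exchanging $A$ and $B$ gives the two-sided bound; this is the standard textbook argument. Your remarks on the convention $\mu_{\min}=\mu_p$ and on the symmetric-dilation alternative are also accurate.

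The paper itself gives no proof of this lemma. It is quoted in Appendix~\ref{app:lemma} as a standard auxiliary fact. The only place it is used is Lemma~\ref{lem:bad-event}, and only in the one-sided form with $A=\tilde\nabla c_k$ and $B=\nabla c_k\in\R^{n\times m}$, so $p=m$. That use is exactly your case $i=p$, so your argument supplies everything the paper relies on.
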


\section{Supported Proofs}\label{app:proof}
\begin{proof}[Proof of Proposition \ref{le:lambda}]
   For simplicity, we denote $A(x):=(\nabla c(x)^\top \nabla c(x))^{-1}$,  $b(x):=\nabla c(x)^\top\nabla f(x)$ and $B(x):=A(x)^{-1}$. It follows from Lemma \ref{lm:smw} that
   \begin{align}\label{Ax-Ay}
       \|A(x)-A(y)\|\leq\|A(x)\| \|\nabla c(x)^\top \nabla c(x) - \nabla c(y)^\top \nabla c(y)\| \|A(y)\|.
   \end{align}
   Since $\lambda(x)=-A(x)b(x)$ is well defined under Assumption \ref{a4}, it holds that
\begin{align*}
&\|\lambda(x)-\lambda(y)\|=
\|A(x)b(x) - A(y)b(y)\| \\
&\leq
\|A(x)\nabla c(x)^\top - A(y)\nabla c(y)^\top\| \|\nabla f(y)\| + \|A(x)\nabla c(x)^\top (\nabla f(x) - \nabla f(y))\| \\
&\leq
\|A(x) - A(y)\| \|\nabla c(x)\| \|\nabla f(y)\| + \|A(y)\| \|\nabla c(x) - \nabla c(y)\| \|\nabla f(y)\| \\
&\quad + \|A(x)\| \|\nabla c(x)\| \|\nabla f(x) - \nabla f(y)\| \\
&\leq
\|A(x)\| \|\nabla c(x)^\top \nabla c(x) - \nabla c(y)^\top \nabla c(y)\| \|A(y)\| \|\nabla c(x)\| \|\nabla f(y)\| \\
&\quad + \|A(y)\| \|\nabla c(x) - \nabla c(y)\| \|\nabla f(y)\| + \|A(x)\| \|\nabla c(x)\| \|\nabla f(x) - \nabla f(y)\| \\
&\leq
\frac{2GL_c^2 L_J}{\nu^4}\|x-y\| + \frac{G L_J}{\nu^2}\|x-y\| + \frac{L_c L_f}{\nu^2}\|x-y\|:=L_\lambda\|x-y\|,\quad \forall x,y,
\end{align*}
where the third inequality follows from (\ref{Ax-Ay}), thereby establishing the first part of the result. Attention is now turned to the second part. It follows from Assumptions \ref{a2}-\ref{a3} that 
\begin{align*}
    \|\nabla B(x) - \nabla B(y)\|&\leq2\|\nabla^2 c(x)\|\|\nabla c(x)-\nabla c(y)\|+2\|\nabla c(y)\|\|\nabla^2 c(x)-\nabla^2 c(y)\|\\&\leq2(L_J^2+L_cL_h^c)\|x-y\|,
\end{align*}
which in turn implies 
\begin{align*}
    &\|\nabla A(x)-\nabla A(y)\|=\|A(x)\nabla B(x)A(x)-A(y)\nabla B(y)A(y)\|\\
    &\leq\|A(x)-A(y)\|\|\nabla B(x)\|\|A(x)\|+\|A(y)\|\|\nabla B(x) - \nabla B(y)\|A(x)\|\\
    &\quad+\|A(y)\|\|\nabla B(y)\|\|A(x) - A(y)\|\\
    &\leq\frac{4L_JL_c}{\nu^2}\|A(x) - A(y)\|+\frac{1}{\nu^4}\|\nabla B(x) - \nabla B(y)\|\\
    &\leq \frac{8L_J^2L_c^2}{\nu^6}\|x-y\|+\frac{2(L_J^2+L_cL_h^c)}{\nu^4}\|x-y\|.
\end{align*}
In addition, one has
\begin{align*}
    &\|\nabla b(x)-\nabla b(y)\|\\
    &=\|\nabla^2c(x)\nabla f(x)+\nabla c(x)^\top\nabla^2f(x)-\nabla^2c(y)\nabla f(y)-\nabla c(y)^\top\nabla^2f(y)\|\\
    &\leq \|\nabla^2c(x)\|\|\nabla f(x) - \nabla f(y)\| + \|\nabla^2c(x) - \nabla^2c(y)\|\|\nabla f(y)\| \\ 
    &\quad + \|\nabla c(x)\|\|\nabla^2f(x) - \nabla^2f(y)\| + \|\nabla c(x) - \nabla c(y)\|\|\nabla^2f(y)\| \\ 
    &\leq L_JL_f\|x-y\| + L_h^cG\|x-y\|+ L_cL_h^f\|x-y\| + L_JL_f\|x-y\| \\ 
    &= (2L_JL_f + L_h^cG + L_cL_h^f)\|x-y\|.
\end{align*}
Hence, together with
\begin{align*}
    \|b(x)-b(y)\|&\leq\|\nabla c(x)\|\|\nabla f(x)-\nabla f(y)\|+\|\nabla c(x)-\nabla c(y)\|\|\nabla f(x)\|\\
    &\leq (L_cL_f+GL_J)\|x-y\|,
\end{align*}
we can derive
\begin{align*}
    &\|\nabla\lambda(x)-\nabla\lambda(y)\|\\
    &=\|\nabla A(x)b(x)+A(x)\nabla b(x)-\nabla A(y)b(y)-A(y)\nabla b(y)\|\\
    &\leq\|\nabla A(x)\|\|b(x)-b(y)\|+\|\nabla A(x)-\nabla A(y)\|\|b(y)\|+\|A(x)\| \|\nabla b(x) - \nabla b(y)\| \\
    &\quad+ \|A(x) - A(y)\| \|\nabla b(y)\|\\
    &\leq\frac{2L_JL_c(L_cL_f+GL_J)}{\nu^4}\|x-y\|+GL_c\left(\frac{8L_J^2L_c^2}{\nu^6}+\frac{2(L_J^2+L_cL_h^c)}{\nu^4}\right)\|x-y\|\\
    &\quad+\frac{2L_JL_f + L_h^cG+ L_cL_h^f}{\nu^2}\|x-y\|+\frac{2L_JL_c(GL_J+ L_cL_f)}{\nu^4}\|x-y\|:=L_\lambda^1\|x-y\|,
\end{align*}
which yields the conclusion. 

To prove the Lipschitz continuity of $\nabla \mathcal L(x,\rho)$ in \(x\), for notational convenience we define 
$\psi(x,\rho):=\langle\lambda(x), c(x)\rangle+{\frac{\rho}{2}}\|c(x)\|^{2}$. 
It then follows that
    \begin{align*}
        \|\nabla\psi(y,\rho)-\nabla\psi(x,\rho)\|
&=\|\nabla\lambda(y)c(y)+\nabla c(y)\lambda(y)+\rho\nabla c(y)c(y)-\nabla\lambda(x)c(x)\\
        &\qquad-\nabla c(x)\lambda(x)-\rho\nabla c(x)c(x)\|\\
        &\leq\|\nabla\lambda(y)\|\|c(y)-c(x_k)\|+\|\nabla\lambda(y)-\nabla\lambda(x)\|\|c(x)\|\\
        &\quad+\|\lambda(y)-\lambda(x)\|\|\nabla c(y)\|+\|\lambda(x)\|\|\nabla c(y)-\nabla c(x)\|\\
        &\quad+\rho\|\nabla c(y)\|\|c(y)-c(x)\|+\rho\|\nabla c(y)-\nabla c(x)\|\|c(x)\|\\
        &\leq \Big(2L_\lambda L_c+ML_\lambda^1+\frac{GL_cL_J}{\nu^2}+\rho (L_c^2+ML_J)\Big)\|y-x\|,
    \end{align*}
    which further indicates
    \begin{align*}
        \|\nabla\calL(y,\rho)-\nabla\calL(x,\rho)\|
        &\leq\|\nabla f(y)-\nabla f(x)\|+\|\nabla\psi(y,\rho)-\nabla\psi(x,\rho)\|\\
        &\leq\Big(L_f+2L_\lambda L_c+ML_\lambda^1+\frac{GL_cL_J}{\nu^2}+\rho (L_c^2+ML_J)\Big)\|y-x\|,
    \end{align*}
    which completes the proof.
\end{proof}

\bibliographystyle{abbrv}
\bibliography{paper_assets/references}

\end{document}